\documentclass[11pt]{amsart}

\usepackage{amsmath,amssymb,latexsym,soul,cite,mathrsfs}

\usepackage{color,enumitem,graphicx}
\usepackage[colorlinks=true,urlcolor=blue,
citecolor=red,linkcolor=blue,linktocpage,pdfpagelabels,
bookmarksnumbered,bookmarksopen]{hyperref}
\usepackage[english]{babel}

\usepackage[left=2.9cm,right=2.9cm,top=2.8cm,bottom=2.8cm]{geometry}
\usepackage[hyperpageref]{backref}

\usepackage[colorinlistoftodos]{todonotes}
\makeatletter
\providecommand\@dotsep{5}
\def\listtodoname{List of Todos}
\def\listoftodos{\@starttoc{tdo}\listtodoname}
\makeatother

\numberwithin{equation}{section}
%\pagestyle{myheadings}
% \markboth{}{} \pretolerance=10000
%\def\lb{\lambda}
%\def\var{\varepsilon}
%\def\pil{\left<}
%\def\pir{\right>}

%\def\nd{\noindent}
%\def\thend{\rule{3mm}{3mm}}
%\def\mathbb Re{\mathbb{R}}

%\newtheorem{lem}{Lemma}
%\newtheorem{prop}{Proposition}
%\newtheorem{theo}{Theorem}
%\newtheorem{coro}{Corollary}
%\newtheorem{rem}{Remark}

%\newcommand{\fim}{\hfill\rule{2mm}{2mm}}
%\newcommand{\n}{{\noindent}}
%\newcommand{p}{\displaystyle}
%\newcommand{\mathbb R}{\mbox{${\rm{I\!R}}$}}
%\newcommand{\mathbb RN}{I\!\!R^N}
%\newcommand{\N}{\mbox{${\rm{I\!N}}$}}
%\newcommand{\mathbb R}{\mathrm{I\!R\!}}
%\newcommand{\mathcal N}{\mathrm{I\!N\!}}
%\newcommand{\C}{\mbox{${\rm{C\hspace{-1.8mm}\rule{0.3mm}{2.8mm}}}$}}
%\def\n{\hspace*{0em}}
%\newcommand{}{\displaystyle}
%\def\dis{\displaystyle}
%\def\theequation{\thesection.\arabic{equation}}
%\let\Section=\section

%\def\section{\setcounter{equation}{0}\Section}

\newtheorem{theorem}{Theorem}[section]
\newtheorem{proposition}[theorem]{Proposition}
\newtheorem{lemma}[theorem]{Lemma}
\newtheorem{corollary}[theorem]{Corollary}

\newtheorem{claim}[theorem]{Claim}

\begin{document}
	
	\title[Existence of solution for a class of elliptic equation ... ]
	{Existence of solution for a class of elliptic equation with discontinuous nonlinearity and asymptotically linear }

	\author{Claudianor O. Alves}
	\author{Geovany F. Patricio}

	\address[Claudianor O. Alves]{\newline\indent Unidade Acad\^emica de Matem\'atica
		\newline\indent 
		Universidade Federal de Campina Grande,
		\newline\indent
		58429-970, Campina Grande - PB - Brazil}
	\email{\href{mailto:coalves@mat.ufcg.edu.br}{coalves@mat.ufcg.edu.br}}
	
	\address[Geovany F. Patricio]
	{\newline\indent Unidade Acad\^emica de Matem\'atica
		\newline\indent 
		Universidade Federal de Campina Grande,
		\newline\indent
		58429-970, Campina Grande - PB - Brazil}
	\email{\href{fernandes.geovany@yahoo.com.br}{fernandes.geovany@yahoo.com.br}}

	\pretolerance10000
	
	%\begin{document}
	
	\begin{abstract}
		\noindent This paper concerns the existence of a nontrivial solution for the following problem
		\begin{equation}
		\left\{\begin{aligned}
		-\Delta u + V(x)u & \in \partial_u F(x,u)\;\;\mbox{a.e. in}\;\;\mathbb{R}^{N},\nonumber \\
		u \in H^{1}(\mathbb{R}^{N}), 
		\end{aligned}
		\right.\leqno{(P)}
		\end{equation} 
where $F(x,t)=\int_{0}^{t}f(x,s)\,ds$, $f$ is a discontinuous function and asymptotically linear at infinity, $\lambda=0$ is in a spectral gap of $-\Delta+V$, and  $\partial_t F$ denotes the generalized gradient of $F$ with respect to variable $t$. Here, by employing Variational Methods for Locally Lipschitz Functionals, we establish the existence of solution when $f$ is periodic and non periodic.	
\end{abstract}
	\subjclass[2019]{Primary:35J15, 35J20; Secondary: 26A27} 
\keywords{Elliptic problem, Variational methods,  Discontinuous nonlinearity, Asymptotically linear}

\maketitle

	%------------------------------------------------------------------------------
\section{Introduction}
%------------------------------------------------------------------------------
In this paper we study the existence of nontrivial solution for the following class of elliptic problems
\begin{equation}
\left\{\begin{aligned}
-\Delta u + V(x)u & \in \partial_u F(x,u)\;\;\mbox{a.e. in}\;\;\mathbb{R}^{N},\nonumber \\
u \in H^{1}(\mathbb{R}^{N}), 
\end{aligned}
\right.\leqno{(P)}
\end{equation} 
where $F(x,t)=\int_{0}^{t}f(x,s)\,ds$, $f$ is a discontinuous function and asymptotically linear at infinity, $\lambda=0$ is in a spectral gap of $-\Delta+V$ and  $\partial_t F$ denotes the generalized gradient of $F$ with respect to variable $t$. 

The problem $(P)$ for the case where $f$ is a continuous function becomes 
\begin{equation}
	\left\{\begin{array}{l}
		-\Delta u + V(x)u=f(x,u)\;\;  \mbox{a.e. in}\;\;\mathbb{R}^{N},\nonumber \\
		u \in H^{1}(\mathbb{R}^{N}),
	\end{array}
	\right. \leqno{(P_1)}
\end{equation} 
and it has been studied for some authors. In \cite{G.B Li}, Li and Szulkin have improved the generalized link theorem obtained in Kryszewski and Szulkin \cite{Kryszewski} to establish the existence of solution for $(P_1)$ with $f$ being asymptotically linear at infinity and asymptotic to a $\mathbb{Z}^{N}$-periodic function. Motivated by results found in \cite{G.B Li}, some authors studied the problem $(P_1)$ with the same conditions on $V$ and supposing other conditions on $f$, but  $f$ still being asymptotically linear, see for example,  Ding and Lee \cite{Ding},  Chen and Dawei \cite{Shaowei}, Tang \cite{X1Tang}, Lin and Tang \cite{H1Tang}, Wu and Qin \cite{Qin1} and their references.

The main motivation of the present paper comes from the study found in Li and Szulkin \cite{G.B Li} and Alves and Patricio \cite{AG}. In \cite{AG}, the authors have studied the existence of nontrivial solution for problem $(P)$ for a class of superlinear problem where the nolinearity is a discontinuous function and $\lambda=0$ is in a spectral gap of $-\Delta+V$. In that paper, it was proved a  generalized link theorem for Locally Lipschitz functionals that improves the generalized link theorem  found in \cite{Kryszewski}, and after that, the authors used their link theorem to prove  the existence of nontrivial solution for $(P)$.

Hereafter, we assume the following conditions on $f$ and $V$: 
\begin{itemize}
	\item [(H1)] $V:\mathbb{R}^{N}\rightarrow \mathbb{R}$ is continuous, $\mathbb{Z}^{N}$-periodic and
	\begin{equation*}
	0 \notin \sigma(-\Delta +V).
	\end{equation*}
	In the sequel, $(\mu_{-1}, \mu_1)$ is the spectral gap containing 0 and $\mu_0:= \min\{-\mu_1,\mu_1\}$.
	\item [(H2)]   
	\begin{equation*}
	\lim_{t\rightarrow 0}\frac{f(x,t)}{t}=0 \quad \mbox{uniformly in} \quad x \in \mathbb{R}^N,
	\end{equation*}
	and for any $0<a<b<+\infty$, there is $M=M(a,b)>0$ such that 
	$$
	|f(x,t)| \leq M, \quad \forall x \in \mathbb{R}^N  \quad \mbox{and} \quad |t| \in [a,b].
	$$
	\item [(H3)] $f(x,t)= V_{\infty}(x)t+f_{\infty}(x,t)$ where $f_{\infty}(x,\cdot)$ is a measurable function defined on $\mathbb{R}^N \times \mathbb{R}$, $\frac{f_{\infty}(x,t)}{t}\rightarrow 0$ uniformly with respect to $x$ as $|t|\rightarrow +\infty$, $V_{\infty}$ is $\mathbb{Z}^{N}$-periodic and $V_{\infty}(x)\geq \mu,\;\forall\; x \in \mathbb{R}^{N}$, for some $\mu>\mu_{1}$. Moreover, the functions 	
	$$
	\underline{f}(x,t)= \lim_{r\downarrow 0} ess \inf\{f(x,s); |s-t|<r\}
	$$
	and
	$$
	\overline{f}(x,t)= \lim_{r\downarrow 0} ess \sup\{f(x,s); |s-t|<r\}.
	$$
	are $N$-measurable functions, see Chang \cite{Chang1, chang2, chang3} for more details.
	
	\item [(H4)] $0 \leq F(x,t) \leq \frac{1}{2} \rho t$ for all $\rho \in \partial_{t}F(x,t)$ and $t \in \mathbb{R}$, where
	\begin{equation*}
	F(x,t)=\int_{0}^{t}f(x,s) ds.
	\end{equation*}
	
	\item [(H5)] There exists $\delta \in (0, \mu_{0})$ such that if $\;\; \frac{\rho }{t} \geq \mu_{0}-\delta$ and $\rho\in  \partial_{t}F(x,t)$, then $\frac{1}{2}\rho t-F(x,t) \geq \delta.$
\end{itemize}

A nonlinearity $f$ that satisfies the conditions above is the following: Fixed $b>0$, let us consider the function 
	$$
f(x,t)= \left\{\begin{aligned}
& \mu t- \mu \arctan(t),\;\;\mbox{if}\;\; |t|\leq b \\
&  \mu t+\mu\;(\gamma-1)\;\arctan(t),\;\; \mbox{if}\;\; |t|> b
\end{aligned}
\right. 
$$
where $\mu>\mu_{1}$ and $0<\gamma$ is such that $\gamma < \dfrac{\mu_{0}}{\mu}$.

\vspace{0.5 cm}

Our main result is the following:
\begin{theorem} \label{Teorema1}(The periodic case)
	Assume $(H1)-(H5)$ and that $f$ is $\mathbb{Z}^{N}$-periodic. Then, the problem $(P)$ has a nontrivial solution.
\end{theorem}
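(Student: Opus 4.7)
The plan is to apply the generalized link theorem for locally Lipschitz functionals from Alves--Patricio \cite{AG} to the energy functional
\begin{equation*}
I(u) = \tfrac{1}{2}\int_{\mathbb R^{N}}\bigl(|\nabla u|^{2}+V(x)u^{2}\bigr)\,dx - \int_{\mathbb R^{N}} F(x,u)\,dx,
\end{equation*}
defined on $E = H^{1}(\mathbb R^{N})$. Under $(H2)$ and the growth information implicit in $(H2)$--$(H3)$, $I$ is well defined and locally Lipschitz, with generalized gradient $\partial I(u)\subset E^{\ast}$ corresponding to the inclusion in $(P)$. First I would decompose $E = E^{-}\oplus E^{+}$ according to the positive/negative spectral subspaces of $-\Delta+V$ and introduce the equivalent inner product under which the quadratic part of $I$ takes the form $\tfrac12(\|u^{+}\|^{2}-\|u^{-}\|^{2})$; the spectral gap $(\mu_{-1},\mu_{1})$ guarantees this is well defined, with $\mu_{0}=\min\{|\mu_{-1}|,\mu_{1}\}$.

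Next I would verify the two geometric hypotheses of the generalized link theorem. The local condition $I|_{S_{r}^{+}}\geq\alpha>0$ for small $r>0$ follows from $(H2)$ together with the Sobolev embedding: the subquadratic behavior of $F$ near $0$ lets the positive quadratic term on $E^{+}$ dominate. For the linking inequality, I would pick $e\in E^{+}$ with $\|e\|=1$ and consider the set $M = \{v+se : v\in E^{-},\ \|v\|\leq R,\ 0\leq s\leq R\}$, and show $I\leq 0$ on $\partial M$ for $R$ large. Here the decomposition $f(x,t)=V_{\infty}(x)t+f_{\infty}(x,t)$ from $(H3)$ with $V_{\infty}\geq\mu>\mu_{1}$ is exactly what forces $I\to-\infty$ in the directions along $E^{-}\oplus\mathbb R_{+}e$: on bounded energy one compares $\tfrac12\|u^{+}\|^{2}-\tfrac12\|u^{-}\|^{2}-\tfrac12\int V_{\infty}u^{2}$ and uses a spectral argument showing that $\mu>\mu_{1}$ makes this quadratic form negative definite (up to the vanishing $f_{\infty}$-correction). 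The theorem from \cite{AG} then delivers a Cerami sequence $(u_{n})\subset E$ with $I(u_{n})\to c\in[\alpha,\sup_{M}I]$ and $(1+\|u_{n}\|)\,m(u_{n})\to 0$, where $m(u_{n}) = \min\{\|\xi\|_{E^{\ast}} : \xi\in\partial I(u_{n})\}$.

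The main obstacle is the boundedness of this Cerami sequence, which in the asymptotically linear discontinuous setting is where $(H4)$ and $(H5)$ play their role. Arguing by contradiction, suppose $\|u_{n}\|\to\infty$ and set $w_{n}=u_{n}/\|u_{n}\|$; extracting a weak limit $w$. From the Cerami condition one has $\tfrac12 \rho_{n} u_{n}-F(x,u_{n})$-type identities with $\rho_{n}\in\partial_{t}F(x,u_{n})$, and $(H5)$ provides a uniform positive lower bound $\delta$ on the integrand on the set $\Omega_{n}=\{x:\rho_{n}(x)/u_{n}(x)\geq\mu_{0}-\delta\}$. Together with $(H4)$ this forces $|\Omega_{n}|\to 0$; outside $\Omega_{n}$ one has $\rho_{n}/u_{n}<\mu_{0}-\delta$, and plugging into the linearized equation for $w_{n}$ produces a quadratic form bounded away from $0$ on $E^{+}$ and $E^{-}$, contradicting either $w\equiv 0$ (by testing against a suitable vector) or $w\not\equiv 0$ (by an eigenvalue argument against $\mu_{0}$). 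This yields boundedness.

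Once $(u_{n})$ is bounded, $\mathbb Z^{N}$-periodicity of $V$, $V_{\infty}$ and $f$ allows the usual nonvanishing/translation argument: if $u_{n}\to 0$ in $L_{\mathrm{loc}}^{2}$ a standard Lions lemma combined with $(H2)$ would give $\int F(x,u_{n})\to 0$ and hence $c=0$, contradicting $c\geq\alpha>0$; otherwise translating $u_{n}(\cdot)\mapsto u_{n}(\cdot+k_{n})$ by suitable $k_{n}\in\mathbb Z^{N}$ produces a weak limit $u\neq 0$, and by $\mathbb Z^{N}$-invariance of $I$ and closedness of $\partial I$ under weak convergence (in the locally Lipschitz sense of Chang \cite{Chang1,chang2,chang3}) one gets $0\in\partial I(u)$. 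Finally $(H3)$ together with the definitions of $\underline f,\overline f$ translates $0\in\partial I(u)$ into the inclusion $-\Delta u+V(x)u\in\partial_{u}F(x,u)$ almost everywhere, producing the desired nontrivial solution.
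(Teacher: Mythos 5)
Your overall strategy is the same as the paper's: the linking theorem for locally Lipschitz functionals on $H^{1}(\mathbb{R}^{N})=E^{-}\oplus E^{+}$, the linking geometry from $(H2)$--$(H3)$ (with $V_{\infty}\geq\mu>\mu_{1}$ driving $I\leq 0$ on the boundary of the cylinder), boundedness of the Cerami sequence via $(H4)$--$(H5)$ and the sets $\Omega_{n}$, and finally a $\mathbb{Z}^{N}$-translation argument combined with weak-$\ast$ closedness of the generalized gradient (Proposition \ref{7}) to produce a nontrivial critical point. The geometry and the endgame are fine as sketched.

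The gap is in the boundedness step. First, $(H4)$--$(H5)$ together with $\int(\tfrac12\rho_{n}u_{n}-F(x,u_{n}))\,dx=I(u_{n})-\tfrac12\langle w_{n},u_{n}\rangle\to c$ give only $\delta\,|\Omega_{n}|\leq c+o(1)$, i.e.\ boundedness of $|\Omega_{n}|=|\{\rho_{n}/u_{n}\geq\mu_{0}-\delta\}|$, not $|\Omega_{n}|\to 0$; the contradiction must come from showing that this measure in fact tends to $+\infty$. Second, and more seriously, the dichotomy you run on $w_{n}=u_{n}/\|u_{n}\|$ --- weak limit zero versus nonzero --- is the wrong one for a $\mathbb{Z}^{N}$-invariant problem: $w_{n}$ can converge weakly to $0$ while being non-vanishing in the sense of Lions, and in that regime neither of your branches closes. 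The paper uses the vanishing/non-vanishing alternative. In the vanishing case, $w_{n}\to 0$ in $L^{p}$ for $p\in(2,2^{*})$ is exactly what is needed: testing the rescaled equation with $w_{n}^{+}-w_{n}^{-}$ gives $\int\frac{\rho_{n}}{u_{n}}w_{n}(w_{n}^{+}-w_{n}^{-})\to 1$, the contribution of the complement of $\Omega_{n}$ is at most $(\mu_{0}-\delta)/\mu_{0}<1$, and H\"older on $\Omega_{n}$ shows its contribution is negligible \emph{only if} $|\Omega_{n}|$ stays bounded and $\|w_{n}\|_{p}\to 0$; hence $|\Omega_{n}|\to+\infty$, contradicting $(H4)$--$(H5)$. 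In the non-vanishing case one must translate by $z_{n}\in\mathbb{Z}^{N}$, extract a nonzero weak limit $\tilde v$ solving $-\Delta\tilde v+(V-V_{\infty})\tilde v=0$, and conclude by the fact that the periodic operator $-\Delta+V-V_{\infty}$ has purely absolutely continuous spectrum and therefore no eigenvalues; a comparison against $\mu_{0}$ cannot do this job, since $V_{\infty}\geq\mu>\mu_{1}$ is by design large enough to cross the spectral gap, so the associated quadratic form has no definite sign. Without these two ingredients the boundedness proof does not close.
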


The Theorem \ref{Teorema1} complements the study made in \cite{G.B Li} and \cite{AG} in the following sense: It complements \cite{G.B Li}, because in that paper the nonlinearity is continuous, while in the present paper the nonlinearity is discontinuous. Moreover, since in our paper the functional is not $C^1$, it was necessary to prove a version for Locally Lipschitz Functionals of the linking theorem developed found \cite{G.B Li}, see Sections 4 and 5. Related to the \cite{AG}, we are working with a nonlinearity that is asymptotically linear at infinity, while in that paper the nonlinearity is superlinear.

In order to study the non periodic case, that is, the case where $f$ is not necessarily a periodic function, we will assume that there is 
$h \in C(\mathbb{R}^{N}\times\mathbb{R}, \mathbb{R})$ with $h_{t} \in C(\mathbb{R}^{N}\times\mathbb{R}, \mathbb{R})$ such that 
\begin{itemize}
	\item[(H6)] $h$ is $\mathbb{Z}^{N}$-periodic and 
	\begin{equation*}
		0<h(x,t)t<t^2h_{t}(x,t)\leq V_{\infty}(x)t^2,\;\;\mbox{whenever}\;\;t\neq 0,
	\end{equation*} 
	where $h_{t}$ denotes the derived from function $h$ with respect to $t$ and $V_\infty$ was given in $(H3)$.
	\item [(H7)] $F(x,t)\geq H(x,t)$ for all $t \in \mathbb{R}$ and
	\begin{eqnarray}
		&&	|f(x,t)-h(x,t)|\leq a(x)|t|,\;\forall\;t \in \mathbb{R}, \nonumber \\
		&&|\rho-h(x,t)|\leq  a(x)|t|,\;\forall\;t \in \mathbb{R}\;\;\mbox{and}\;\;\forall\;\rho \in \partial_{t} F(x,t). \nonumber
	\end{eqnarray}
	where $a(x)>0$ for all $x \in \mathbb{R}^N$, $a \in L^{\infty}(\mathbb{R}^{N})$, $a(x)\rightarrow 0$ as $|x|\rightarrow +\infty$ and
	\begin{equation*}
		H(x,t)=\int_{0}^{t}h(x,s)ds.
	\end{equation*}
\end{itemize}
Next, we show an example of a function $f$ that satisfies the assumptions $(H2)-(H7)$. Fixed $b>0$, let us consider the function 
$$
f(x,t)= \left\{\begin{aligned}
	& h(x,t),\;\;\mbox{if}\;\; |t|\leq b \\
	& h(x,t)+\mu \; \gamma\; a(x)\;\arctan(t),\;\; \mbox{if}\;\; |t|> b,
\end{aligned}
\right. 
$$
where  $\mu > \mu_{1}$ and
\begin{itemize}
	\item [(a)]	$h(x,t)=\mu\;[t-\arctan(t)]$;
	\item [(b)] $a(x)=e^{-|x|^{2}}$;
	\item [(c)] $0<\gamma$ is such that $\|a\|_\infty\gamma < \dfrac{\mu_{0}}{\mu}$.
\end{itemize}
\vspace{0.5 cm}

Our main result involving the non periodic case is the following:
\begin{theorem} \label{Teorema2}(The non periodic case)
	Assume $(H1)-(H7)$. Then, the problem $(P)$ has a nontrivial solution.
\end{theorem}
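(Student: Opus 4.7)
The plan is to mirror the locally Lipschitz linking argument of Theorem \ref{Teorema1} for the functional
$$
\Phi(u)=\tfrac{1}{2}\int_{\mathbb{R}^{N}}\bigl(|\nabla u|^{2}+V(x)u^{2}\bigr)\,dx-\int_{\mathbb{R}^{N}}F(x,u)\,dx
$$
associated with $(P)$, and to recover compactness by comparison with the periodic auxiliary functional
$$
\Phi_{\infty}(u)=\tfrac{1}{2}\int_{\mathbb{R}^{N}}\bigl(|\nabla u|^{2}+V(x)u^{2}\bigr)\,dx-\int_{\mathbb{R}^{N}}H(x,u)\,dx,
$$
which is of class $C^{1}$ thanks to (H6). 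After splitting $H^{1}(\mathbb{R}^{N})=E^{+}\oplus E^{-}$ along the spectral decomposition of $-\Delta+V$, the pointwise bound $F\ge H\ge 0$ from (H7), combined with (H2)-(H3), equips $\Phi$ with the very linking geometry used in the proof of Theorem \ref{Teorema1} and produces a minimax value $c>0$; the same linking sets yield a minimax value $c_{\infty}\ge c$ for $\Phi_{\infty}$. Applying the locally Lipschitz linking theorem of Sections 4-5 to $\Phi$ then delivers a Cerami sequence $(u_{n})\subset H^{1}(\mathbb{R}^{N})$ at level $c$.

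Boundedness of $(u_{n})$ and the fact that its weak limit $u$ is a critical point of $\Phi$ would follow from (H1), (H4)-(H5) and the closedness of the generalized gradient, exactly as in the periodic case: the quantitative estimate $\tfrac{1}{2}\rho t-F(x,t)\ge\delta$ in the regime $\rho/t\ge\mu_{0}-\delta$ rules out unbounded Cerami sequences in the indefinite quadratic form, while the $N$-measurability of $\overline f,\underline f$ in (H3) lets us pass to the limit in the differential inclusion $-\Delta u_{n}+V(x)u_{n}\in\partial_{t}F(x,u_{n})$. If $u\neq 0$, the proof is complete.

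The main obstacle is excluding $u=0$, which is where the non-periodicity creates genuine work; the decisive hypothesis is $a\in L^{\infty}(\mathbb{R}^{N})$ with $a(x)\to 0$ as $|x|\to\infty$ in (H7). Integrating $|f-h|\le a(x)|t|$ in $t$ gives $|F(x,t)-H(x,t)|\le\tfrac12 a(x)t^{2}$, and the second inequality in (H7) yields a parallel bound for every $\rho\in\partial_{t}F(x,t)$ against $h(x,t)$; a weighted Rellich argument using the vanishing of $a$ at infinity then shows that whenever $u_{n}\rightharpoonup 0$ one has
$$
\Phi(u_{n})-\Phi_{\infty}(u_{n})\to 0\quad\text{and}\quad\sup_{\rho_{n}\in\partial\Phi(u_{n})}\|\rho_{n}-\Phi_{\infty}'(u_{n})\|_{(H^{1})^{*}}\to 0,
$$
so $(u_{n})$ becomes a Cerami sequence of the periodic $C^{1}$ functional $\Phi_{\infty}$ at level $c$. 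To close the argument I would prove the strict inequality $c<c_{\infty}$ by testing $\Phi$ along the path used to compute $c_{\infty}$ and exploiting the strict pointwise inequality $F(x,t)>H(x,t)$ on $\{a>0\}\cap\{t\neq 0\}$; then a Lions-type non-vanishing lemma, combined with $\mathbb{Z}^{N}$-translations $y_{n}$ of the Cerami sequence, would extract from $\bigl(u_{n}(\,\cdot-y_{n})\bigr)$ a nontrivial weak limit that is a critical point of $\Phi_{\infty}$ at a level no greater than $c<c_{\infty}$, contradicting the fact that $c_{\infty}$ is the least level at which the periodic linking geometry for $\Phi_{\infty}$ detects a critical point. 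Carrying out this splitting inside the locally Lipschitz framework, and verifying the strict inequality, is the step I expect to require the most care.
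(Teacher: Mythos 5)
Your setup --- the linking geometry for $I$, the bounded Cerami sequence at the minimax level $c$, and, when the weak limit vanishes, the comparison estimates $\Psi(u_n)-\tilde{\Psi}(u_n)\to 0$ and $\|\rho_n-\tilde{\Psi}'(u_n)\|_{*}\to 0$ obtained from $a(x)\to 0$ at infinity together with $u_n\to 0$ in $L^2_{loc}$ --- is exactly the paper's route (this is Claim \ref{A67}). The gap is in your final step. You propose to conclude by proving the strict inequality $c<c_{\infty}$ between the level of $I$ and that of the periodic functional $J$ and then deriving a contradiction. That strict inequality is not available from the hypotheses: (H7) only gives $F\ge H$, hence $I\le J$ on $\mathcal{M}$ and $c\le\sup_{\mathcal{M}}J=J(u_0)$, but nothing forces $F(x,t)>H(x,t)$ anywhere (in the paper's own example $f\equiv h$ for $|t|\le b$, so $F=H$ on that range, and a ground state of $J$ may well take values only there). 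Worse, the paper's argument shows that in the scenario $u_n\rightharpoonup 0$ one necessarily lands in the \emph{equality} case: the transferred Cerami sequence for $J$ yields, after $\mathbb{Z}^N$-translations, a nontrivial critical point $u_1$ of $J$ with $J(u_1)\le c$, and the least-energy characterization (\ref{T21}) then forces $J(u_0)\le J(u_1)\le c\le\sup_{\mathcal{M}}I\le J(u_0)$, i.e. $c=\sup_{\mathcal{M}}I=J(u_0)$. So the contradiction you are aiming for never materializes.

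The missing ingredient is precisely the device the paper introduces to handle this degenerate situation: Corollary \ref{N4}, a version of the linking theorem for the case $c=\sup_{\mathcal{M}}I$, which uses the boundedness of $\mathcal{M}$, the structure $I=\frac12\|u^+\|^2-\frac12\|u^-\|^2-\Psi$, and condition (H) to produce a point $v\in\mathcal{M}$ with $I(v)=c>0$ and $0\in\partial I(v)$ directly, without any strict energy comparison. Unless you either establish the strict inequality under additional assumptions (which (H6)--(H7) do not supply) or prove and invoke such a degenerate-linking statement, your argument cannot close in the case $u=0$.
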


The Theorem \ref{Teorema2} also complements the study made in \cite{G.B Li}, because we are considering that $f$ can be a discontinuous function. 

The plan of the paper is as follows. In Section 2, we recall some definitions and basic results on the critical point theory of Locally Lipschitz functionals. In Section 3, we study a deformation lemma for Locally Lipschitz functionals. In Section 4, we prove a linking theorem for Locally Lipschitz Functionals.  Finally, in Sections 5 and 6, we employ the linking theorem to prove Theorems \ref{Teorema1} and \ref{Teorema2}.

Before concluding this section, still in the context of asymptotically linear problems, we would like to cite the papers \cite{AlamaLi, Maia3, Maia2, Maia, Shaowei,Heerden1, Heerden2, Heerden3, D.G, Jeanjean, Stuart1, Tehrani, Liu.C, Zhou and Zhu, Liu.C1, LiZhou, Qin, Chang.X} 
\vspace{0.5 cm}

\noindent \textbf{Notation:} From now on, otherwise mentioned, we use the following notations:
\begin{itemize}
	
	\item $\|\,\,\,\|_X$ denotes the norm of the space $X$. 
	
	\item $X^{*}$ denotes the dual topological space of $X$ and $||\;\;\;||_{*}$ denotes the norm in $X^{*}$.
	
	\item $B_r(u)$ is an open ball centered at $u \in X$ with radius $r>0$.
	
	%	\item   $C$ denotes any positive constant, whose value is not relevant.
	
	\item  $||\,\,\,||_p$ denotes the usual norm of the Lebesgue space $L^{p}(\mathbb{R}^N)$, for $p \in [1,+\infty]$.
	
	%\item  $||\,\,\,||_{H}$ denotes the usual norm of the Orlicz space $L^{H}(\mathbb{R}^N)$ associated the $N$-function $H$.
	
	%\item If $u:\mathbb{R}^N \to \mathbb{R}$ is mensurable function, the integral $\int_{\mathbb{R}^N}u\,dx$ will be denoted by $\int_{\mathbb{R}^N}u$.
	
	%\item $o_{n}(1)$ denotes a real sequence with $o_{n}(1)\to 0$ as $n \to +\infty$.
	
	\item $l:X\rightarrow \mathbb{R}$ denotes a continuous linear functional.
	
	\item $I_{d}:X\rightarrow X$ denotes the identity application.
	
	\item   $C_i$ denote (possibly different) any positive constants, whose values are not relevant.
	
\end{itemize}

\section{Preliminary results}

In this section we recall some facts involving nonsmooth analysis and the energy functional associated with problem $(P)$.

\subsection{Basic results from nonsmooth analysis}

In this subsection, for the reader's convenience, we recall some definitions and basic results on the critical point theory of Locally Lipschitz Functionals as developed by Chang \cite{Chang1}, Clarke \cite{Clarke, Clarke1}, and Grossinho and Tersian \cite{rosario}.

Let $(X, ||\;||_{X})$ be a real Banach space. A functional $I :X \rightarrow \mathbb{R} $ is locally Lipschitz, $I\in Lip_{loc}(X, \mathbb{R})$ for short, if given $u \in X$ there is an open neighborhood $V:=V_{u} \subset X$ of $u$, and a constant $K=K_{u}>0$ such that

\begin{equation*}
|I(v_{2})-I(v_{1})| \leq K ||v_{1}-v_{2}||_{X},\;\;v_{i} \in V,\;i=1,2.
\end{equation*}
The generalized directional derivative of $I$ at $u$ in the direction of $v \in X$ is defined by
\begin{equation*}
I^{\circ}(u;v)= \limsup_{h\rightarrow 0, \delta \downarrow 0}\frac{1}{\delta}\left(I(u+h+\delta v)-I(u+h)\right).
\end{equation*}
%Hence, $I^{\circ}(u; \cdot)$ is continuous, convex and its subdifferential at $z \in X$ is defined by
%\begin{equation*}
%\partial I^{\circ}(u; z)=\{\mu \in X^{*}\;;\; I^{\circ}(u;v) \geq \left<\mu, v-z\right>\,;\forall\; v \in X \}
%\end{equation*}
%where $\left<\cdot, \cdot\right>$ is the duality pairing between $X^{*}$ and $X$. 
The generalized gradient of $I$ at $u$ is the set
\begin{equation*}
\partial I(u)= \{\xi \in X^{*}\;;\; I^{\circ}(u;v) \geq \left<\xi, v\right>\,;\forall\; v \in X \}.
\end{equation*} 
Moreover, we denote by $\lambda_{I}(u)$ the following real number 
$$
\lambda_{I}(u):=\min\{||\xi||_{*}: \xi \in \partial I(u)\}.
$$
We recall that $u \in X$ is a critical point of $I$ if $0 \in \partial I(u)$, or equivalently, when $\lambda_I(u)=0$. 

\begin{lemma}
	If $I$ is continuously differentiable to Fr\'echet in an open neighborhood of $u \in X$, we have $\partial I(u)= \{I'(u)\}$.
\end{lemma}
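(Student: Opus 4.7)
The plan is to prove two inclusions: first that $I'(u)\in\partial I(u)$, and then that this is the only element, by establishing that the generalized directional derivative $I^\circ(u;v)$ in fact coincides with the usual directional derivative $\langle I'(u),v\rangle$ under the $C^1$ hypothesis.

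For the first step, I would fix $v\in X$ and analyze
\begin{equation*}
\frac{1}{\delta}\bigl(I(u+h+\delta v)-I(u+h)\bigr)
\end{equation*}
for small $h\in X$ and small $\delta>0$. Since $I$ is continuously Fréchet differentiable in some open neighborhood $V$ of $u$, for $h$ and $\delta$ small enough the segment joining $u+h$ and $u+h+\delta v$ lies inside $V$, so the classical mean value theorem applied to $t\mapsto I(u+h+t\delta v)$ gives the integral representation
\begin{equation*}
\frac{1}{\delta}\bigl(I(u+h+\delta v)-I(u+h)\bigr)=\int_0^1 \bigl\langle I'(u+h+t\delta v),v\bigr\rangle\,dt.
\end{equation*}
By the continuity of $I'$ on $V$, as $h\to 0$ and $\delta\downarrow 0$ the integrand converges uniformly in $t\in[0,1]$ to $\langle I'(u),v\rangle$, and therefore the $\limsup$ defining $I^\circ(u;v)$ is actually a limit equal to $\langle I'(u),v\rangle$. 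In particular, $I^\circ(u;v)\geq\langle I'(u),v\rangle$ for every $v$, which says $I'(u)\in\partial I(u)$.

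For the reverse inclusion, take any $\xi\in\partial I(u)$. By definition, $\langle\xi,v\rangle\leq I^\circ(u;v)=\langle I'(u),v\rangle$ for all $v\in X$. Replacing $v$ by $-v$ gives $\langle \xi,v\rangle\geq\langle I'(u),v\rangle$, so the two linear functionals agree on every $v\in X$, i.e.\ $\xi=I'(u)$ in $X^*$. Combined with the first step this yields $\partial I(u)=\{I'(u)\}$.

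The only delicate point is justifying passage to the limit in the integral representation, which is where the hypothesis of \emph{continuous} differentiability (not merely Fréchet differentiability at the single point $u$) is essential: without continuity of $I'$ on a neighborhood one cannot control the term $h$ that appears in $I^\circ$, and the argument would break down. Everything else is routine manipulation from the definitions of the generalized directional derivative and the generalized gradient.
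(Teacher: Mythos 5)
Your proof is correct and is the standard Clarke-theory argument: the integral mean value representation plus continuity of $I'$ shows $I^{\circ}(u;v)=\langle I'(u),v\rangle$, from which both inclusions follow immediately. The paper states this lemma without proof, citing it as a known fact from Chang and Clarke, so there is nothing to compare against; your write-up supplies exactly the classical justification those references contain.
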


\begin{lemma}\label{35}
	If $Q \in C^{1}(X, \mathbb{R})$ and $\Psi \in Lip_{loc}(X, \mathbb{R})$, then for each $u \in X$
	$$
	\partial(Q+\Psi)(u)= Q'(u) + \partial \Psi(u).
	$$
\end{lemma}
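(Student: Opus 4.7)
The plan is to reduce the sum-rule identity to a pointwise statement about Clarke's generalized directional derivative, namely
$$
(Q+\Psi)^{\circ}(u;v) = \langle Q'(u), v\rangle + \Psi^{\circ}(u;v) \quad \text{for every } v \in X,
$$
and then to recover the claim from the support-function description of $\partial I(u)$ implicit in the definition given above.

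First I would split the difference quotient inside the $\limsup$ defining $(Q+\Psi)^{\circ}(u;v)$ into a $Q$-part and a $\Psi$-part. Since $Q \in C^{1}(X,\mathbb{R})$, the mean value theorem yields
$$
\frac{Q(u+h+\delta v)-Q(u+h)}{\delta} = \int_{0}^{1}\langle Q'(u+h+t\delta v),\, v\rangle\, dt.
$$
Continuity of $Q'$ on a neighborhood of $u$ gives uniform continuity on a small closed ball around $u$, so as $h\to 0$ and $\delta\downarrow 0$ the integrand converges uniformly in $t \in [0,1]$ to $\langle Q'(u),v\rangle$, and hence the integral converges (as a genuine limit, not just a $\limsup$) to $\langle Q'(u),v\rangle$. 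Because adding a convergent net to one whose $\limsup$ equals $\Psi^{\circ}(u;v)$ preserves additivity of the $\limsup$, the identity above follows.

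With this identity established, I would apply the characterization
$$
\partial I(u) = \{\xi \in X^{*} : \langle \xi, w\rangle \leq I^{\circ}(u;w)\ \text{for all } w \in X\}
$$
with $I = Q+\Psi$. Then $\xi \in \partial(Q+\Psi)(u)$ if and only if $\langle \xi - Q'(u), v\rangle \leq \Psi^{\circ}(u;v)$ for every $v \in X$, which is precisely the statement $\xi - Q'(u) \in \partial\Psi(u)$. Translating the set by $Q'(u)$ gives the announced equality $\partial(Q+\Psi)(u) = Q'(u) + \partial\Psi(u)$.

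The main delicate point is justifying that the $Q$-part really is a true limit rather than merely a $\limsup$, since this is what allows the $\limsup$ of the sum to decompose as a limit plus a $\limsup$. Everything else, namely the mean-value identity, the manipulation of the inequality defining $\partial I$, the observation that $Q$ is itself locally Lipschitz on a neighborhood of $u$ (so that $Q+\Psi \in \mathrm{Lip}_{loc}(X,\mathbb{R})$ and $(Q+\Psi)^{\circ}$ is well-defined), and the translation of a convex set by $Q'(u)$, is routine from the definitions recalled in this section.
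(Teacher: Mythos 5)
Your proof is correct. The paper states this lemma without proof, as a standard fact recalled from the cited references (Clarke, Chang, Grossinho--Tersian), and your argument --- showing $(Q+\Psi)^{\circ}(u;v)=\langle Q'(u),v\rangle+\Psi^{\circ}(u;v)$ by isolating the $C^{1}$ part as a genuine limit via $Q(u+h+\delta v)-Q(u+h)=\int_{0}^{1}\langle Q'(u+h+t\delta v),\delta v\rangle\,dt$ and then translating the support-function characterization of the generalized gradient --- is precisely the canonical proof of Clarke's sum rule in the case where one summand is strictly (here continuously) differentiable.
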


\subsection{Preliminaries results involving the energy functional}\label{section}
By assumptions $(H2)$ and $(H3)$, the energy functional associated with problem $(P)$ is given by 
$$
I(u)= \frac{1}{2}\int_{\mathbb{R}^{N}}(|\nabla u|^{2}+V(x)u^{2})dx-\int_{\mathbb{R}^{N}}F(x,u)dx, \;u \in H^{1}(\mathbb{R}^{N}),
$$
where $F(x,t)= \int_{0}^{t}f(x,s)ds$.

By standard argument, the functional $Q:H^{1}(\mathbb{R}^{N}) \to \mathbb{R}$ given by  
$$ 
Q(u)= \frac{1}{2}\int_{\mathbb{R}^{N}}(|\nabla u|^{2}+V(x)|u|^{2})dx
$$
belongs to $C^{1}(H^{1}(\mathbb{R}^{N}), \mathbb{R})$ and 
$$
Q'(u)v=\int_{\mathbb{R}^{N}}(\nabla u \nabla v+V(x)u v)dx,\;\forall\; u,v \in H^{1}(\mathbb{R}^{N}).
$$
However, the functional $\Psi: H^{1}(\mathbb{R}^{N})\rightarrow \mathbb{R}$ given by
\begin{equation} \label{Psi}
	\Psi(u)=\int_{\mathbb{R}^{N}}F(x,u)dx
\end{equation}
is only  Locally Lipschitz, because the function $f$ is not continuous in whole $\mathbb{R}$. Hence, the functional $I$ is also Locally Lipschitz, from where it follows that we cannot use the variational methods for $C^{1}$ functionals in order to get critical points for $I$, and so, we must use variational methods for Locally Lipschitz. Have this in mind, by Lemma \ref{35},  
$$
\partial I(u)= Q'(u) - \partial \Psi(u), \quad \forall u \in H^{1}(\mathbb{R}^N).
$$  
By $(H1)$, it is well known that $H^{1}(\mathbb{R}^{N})=E^{+}\oplus E^{-}$ is an orthogonal decomposition and there is an equivalent norm $||\cdot||$ to $||\cdot||_{H^{1}(\mathbb{R}^{N})}$ with
$$
\|u\|^2=||u^{+}||^{2}-||u^{-}||^{2}, \quad \forall\; u=u^{+}+u^{-} \in E^{+} \oplus E^{-}.
$$

Recalling that $(\mu_{-1}, \mu_{1})$ is the spectral gap of $-\Delta+V$ containing $0$ and $\mu_{0}=\min\{-\mu_{-1}, \mu_{1}\}$, by Stuart \cite{Stuart},
	\begin{equation}\label{A83}
	||u^{+}||^{2}\geq \mu_{1} ||u^{+}||_{2}^{2},\;\forall\; u^{+} \in E^{+}\;\;\mbox{and}\;\; ||u^{-}||^{2}\geq -\mu_{-1} ||u^{-}||_{2}^{2},\;\forall\; u^{-} \in E^{-}.
	\end{equation}
	Therefore,
	\begin{equation*}
	||u||^{2}\geq \mu_{0} ||u||_{2}^{2},\;\forall\; u \in H^{1}(\mathbb{R}^{N}).
	\end{equation*}	
Moreover, it is possible to prove that 
\begin{equation} \label{Q}
\langle Q'(u),v\rangle=(u^+,v)-(u^-,v), \quad \forall u,v \in H^{1}(\mathbb{R}^N),
\end{equation}
where $(.,.)$ denotes the usual inner product in $H^{1}(\mathbb{R}^N)$.

Using the notations above, we can rewrite $I$ of the form
\begin{equation}\label{Functional}
I(u)= \frac{1}{2}||u^{+}||^{2}-\frac{1}{2}||u^{-}||^{2}-\Psi(u),\quad \forall\; u=u^{+}+u^{-} \in E^{+} \oplus E^{-}.
\end{equation}

In order to apply variational methods for Locally Lipschitz, we claim that $\Psi: L^{2}(\mathbb{R}^{N})\rightarrow \mathbb{R}$ is well defined, because  by  $(H2)-(H3)$, 
\begin{equation} \label{P1*}
	|f(x,t)|\leq C|t|,\;\forall\;t \in \mathbb{R},\;\forall\;x\in \mathbb{R}^{N},
\end{equation}
and so,
\begin{equation}\label{P1}
	|F(x,t)|\leq C|t|^{2},\;\forall\;t \in \mathbb{R},\;\forall\;x\in \mathbb{R}^{N}.
\end{equation} 
Consequently, 
$$\partial \Psi (u)\subset \partial_{t} F(x,u)= [\underline{f}(x, u(x)), \overline{f}(x, u(x))]\;\;\mbox{a.e in}\;\; \mathbb{R}^{N},$$
where
$$\underline{f}(x,t)= \lim_{r\downarrow 0} ess \inf\{f(x,s); |s-t|<r\}$$
and
$$\overline{f}(x,t)= \lim_{r\downarrow 0} ess \sup\{f(x,s); |s-t|<r\}.$$

The inclusion above means that given $\xi \in  \partial \Psi (u) \subset (L^{2}(\mathbb{R}^{N}))^{*}\approx L^{{2}}(\mathbb{R}^{N})$, there is $\tilde{\xi} \in L^{{2}}(\mathbb{R}^{N})$ such that  
\begin{itemize}
	\item $\left<\xi, v \right>= \int_{\mathbb{R}^{N}} \tilde{\xi}v ,\;\forall\; v \in L^{2}(\mathbb{R}^{N})$,
	\item $\tilde{\xi}(x) \in \partial_{t} F(x, u(x)) = [\underline{f}(x,u(x)), \overline{f}(x,u(x))]\;\;\mbox{a.e in}\;\; \mathbb{R}^{N}.$
\end{itemize}

The following proposition is very important to establish the existence of a critical point for the functional $I$.
\begin{proposition}(See \cite{AG}).\label{7}
	If $(u_{n}) \subset H^{1}(\mathbb{R}^{N})$  is such that 
	$u_{n}\rightharpoonup u_{0}$ in $H^{1}(\mathbb{R}^{N})$ and $\rho_{n} \in \partial \Psi(u_{n})$ satisfies $\rho_{n} \stackrel{\ast}{\rightharpoonup} \rho_{0}$ in $(H^{1}(\mathbb{R}^{N}))^{*}$, then $\rho_{0} \in \partial \Psi(u_{0})$. 
\end{proposition}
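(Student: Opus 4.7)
The plan is to exploit the $L^2$-representation of elements of $\partial\Psi$ already recorded just above the statement, and to pass interval bounds to the limit via a Mazur/a.e.\ argument.

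First I would pick, for each $n$, a representative $\tilde{\rho}_n \in L^2(\mathbb{R}^N)$ of $\rho_n$, so that $\langle \rho_n,v\rangle = \int_{\mathbb{R}^N} \tilde{\rho}_n v\,dx$ for every $v\in H^1(\mathbb{R}^N)$ and $\tilde{\rho}_n(x)\in[\underline{f}(x,u_n(x)),\overline{f}(x,u_n(x))]$ a.e. Since $u_n \rightharpoonup u_0$ in $H^1(\mathbb{R}^N)$, the sequence $(u_n)$ is bounded in $L^2$, and by the linear growth \eqref{P1*}, $|\tilde{\rho}_n(x)|\leq C|u_n(x)|$, so $(\tilde{\rho}_n)$ is bounded in $L^2(\mathbb{R}^N)$. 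Passing to a subsequence, $\tilde{\rho}_n \rightharpoonup \tilde{\rho}_0$ in $L^2(\mathbb{R}^N)$ for some $\tilde{\rho}_0 \in L^2(\mathbb{R}^N)$. For every $v\in H^1(\mathbb{R}^N)\subset L^2(\mathbb{R}^N)$, the weak$^*$ convergence $\rho_n\stackrel{\ast}{\rightharpoonup}\rho_0$ in $(H^1(\mathbb{R}^N))^*$ combined with $\int \tilde{\rho}_n v \to \int \tilde{\rho}_0 v$ forces $\langle \rho_0,v\rangle = \int_{\mathbb{R}^N} \tilde{\rho}_0 v\,dx$; thus $\rho_0$ is represented by $\tilde{\rho}_0$.

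The crucial step is to show that $\tilde{\rho}_0(x)\in[\underline{f}(x,u_0(x)),\overline{f}(x,u_0(x))]$ a.e. I would first extract a further subsequence (via the Rellich compactness on balls combined with a diagonal argument) so that $u_n(x)\to u_0(x)$ a.e. on $\mathbb{R}^N$. By Mazur's lemma, there exist convex combinations $\sigma_k = \sum_{n=k}^{N_k}\lambda_{n,k}\tilde{\rho}_n$ (with $\lambda_{n,k}\geq 0$, $\sum_n\lambda_{n,k}=1$) such that $\sigma_k \to \tilde{\rho}_0$ strongly in $L^2(\mathbb{R}^N)$, hence, up to a further subsequence, $\sigma_k(x)\to \tilde{\rho}_0(x)$ a.e. Since $\tilde{\rho}_n(x)\leq \overline{f}(x,u_n(x))$ a.e., convexity yields
\begin{equation*}
\sigma_k(x)\leq \sup_{n\geq k}\overline{f}(x,u_n(x))\quad\text{a.e.,}
\end{equation*}
and sending $k\to\infty$ gives $\tilde{\rho}_0(x)\leq \limsup_{n}\overline{f}(x,u_n(x))$ a.e. By the definition of $\overline{f}$ as a pointwise $\mathrm{ess\,sup}$-limit, the function $\overline{f}(x,\cdot)$ is upper semicontinuous, so $\limsup_n \overline{f}(x,u_n(x))\leq \overline{f}(x,u_0(x))$ a.e. The analogous lower estimate with $\inf$'s, $\underline{f}$, and its lower semicontinuity gives $\tilde{\rho}_0(x)\geq \underline{f}(x,u_0(x))$ a.e.

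Combining the two bounds yields $\tilde{\rho}_0(x)\in \partial_t F(x,u_0(x))$ a.e., and together with the integral representation of $\rho_0$ established in the first paragraph this gives $\rho_0\in\partial\Psi(u_0)$, which is the claim. The main obstacle is precisely the step where one passes a pointwise constraint through a weak limit: weak convergence in $L^2$ does not preserve pointwise inequalities, which is why Mazur's lemma together with a.e.\ convergence of $(u_n)$ and the semicontinuity of $\underline{f}$, $\overline{f}$ is essential; all later computations reduce to bookkeeping once this is in place.
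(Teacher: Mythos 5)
Your overall strategy --- representing each $\rho_n$ by $\tilde{\rho}_n\in L^2(\mathbb{R}^N)$ with $\tilde{\rho}_n(x)\in[\underline{f}(x,u_n(x)),\overline{f}(x,u_n(x))]$ a.e., extracting a weak $L^2$ limit, and transporting the pointwise interval constraint to the limit via Mazur's lemma, a.e.\ convergence of $(u_n)$ and the upper/lower semicontinuity of $\overline{f}(x,\cdot)$ and $\underline{f}(x,\cdot)$ --- is the standard route for this closedness statement (the paper itself only points to \cite{AG}, where essentially this argument is carried out), and those steps are sound: the $L^2$ bound on $(\tilde{\rho}_n)$ follows from the linear growth (\ref{P1*}), the identification of $\tilde{\rho}_0$ with $\rho_0$ through test functions in $H^1$ is fine, and the semicontinuity of $\overline{f}$, $\underline{f}$ is a genuine consequence of their definition as monotone limits of essential sup/inf over shrinking windows.

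The gap is in your last sentence. What you have actually proved is that $\tilde{\rho}_0(x)\in[\underline{f}(x,u_0(x)),\overline{f}(x,u_0(x))]$ a.e., i.e.\ that $\rho_0$ lies in the set $\partial_tF(x,u_0)$. But Section \ref{section} only records the inclusion $\partial\Psi(u_0)\subset\partial_tF(x,u_0)$, which goes in the wrong direction for your purposes: membership of $\tilde\rho_0$ in the pointwise interval does not by itself yield $\langle\rho_0,v\rangle\le\Psi^{\circ}(u_0;v)$ for all $v$, which is what $\rho_0\in\partial\Psi(u_0)$ means. To finish you must invoke the converse inclusion, namely Chang's equality $\partial\Psi(u)=\{\xi\in L^2:\ \underline{f}(x,u(x))\le\xi(x)\le\overline{f}(x,u(x))\ \mbox{a.e.}\}$, equivalently $\Psi^{\circ}(u;v)=\int_{\mathbb{R}^N}F^{\circ}(x,u;v)\,dx$, which holds precisely because $(H3)$ assumes $\underline{f}$ and $\overline{f}$ are $N$-measurable; see \cite{Chang1} or \cite{rosario}. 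This is not ``bookkeeping'': without that theorem your argument only places $\rho_0$ in a set that contains $\partial\Psi(u_0)$ and could a priori be strictly larger. With that result cited explicitly, the proof is complete.
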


\section{A special deformation lemma}	

From now on, $X$ is a Hilbert space with $X=Y \oplus Z$, where $Y$ is a separable closed subspace of $X$ and $Z=Y^{\perp}$. If $u \in X$,  $u^{+}$ and $u^{-}$ denote the orthogonal projections from $X$ in $Z$ and in $Y$, respectively. In $X$ let us define the norm
\begin{eqnarray}
|||\cdot|||: &X &\longrightarrow \mathbb{R} \nonumber \\
&u& \longmapsto |||u|||=\max\left\{||u^{+}||, \sum_{k=1}^{\infty}\frac{1}{2^{k}}|(u^{-},e_{k})|\right\}, \nonumber
\end{eqnarray}
where $(e_{k})$ is a total orthonormal sequence in $Y$. The topology on $X$ generated by $|||\cdot|||$ will be denoted by $\tau$ and all topological notions related to it will include this symbol. From \cite{Kryszewski}, if $(u_{n}) \subset X$ is a bounded sequence, then  
\begin{equation} \label{ob1}
u_{n}\stackrel{\tau}{\rightarrow} u\;\; \mbox{in}\;\; X\Leftrightarrow u_{n}^{-}\rightharpoonup u^{-}\;\; \mbox{and}\;\; u_{n}^{+}\rightarrow u^{+} \quad \mbox{in} \quad X.
\end{equation}
Let $I :X \rightarrow \mathbb{R}$  be a Locally Lipschitz functional, $I\in Lip_{loc}(X, \mathbb{R})$ for short, of the form 
\begin{equation}\label{Func.}
I(u)= \frac{1}{2}||u^{+}||^{2}-\frac{1}{2}||u^{-}||^{2}-\Psi(u),
\end{equation}
where $\Psi \in Lip_{loc}(X, \mathbb{R})$ is bounded from below, $I$ is $\tau$-upper semicontinuous, and $||\cdot||$ is an equivalent norm  to $||\cdot||_{X}$. Moreover, we suppose that
$$\partial I(u)= Q'(u)- \partial \Psi(u),$$
where $Q \in C^{1}(X, \mathbb{R})$ and
$$
\left<Q'(u),v\right>=(u^{+},v)-(u^{-},v),\;u, v \in X,
$$
with $(\cdot, \cdot)$ being the inner product of $X$.

From now on, we will say that a functional $I:X\rightarrow \mathbb{R}$ verifies the condition $(H)$ when: \\
If $(u_{n}) \subset I^{-1}([\alpha, \beta])$ is such that $u_{n}\stackrel{\tau}{\rightarrow} u_{0}$ in $X$, then there exists $M>0$ such that $\partial \Psi(u_{n}) \subset B_{M}(0) \subset X^{*},\;\forall\; n \in \mathbb{N}$. In addition, if  $\xi_{n} \in \partial I(u_{n})$ with $\xi_{n} \stackrel{\ast}{\rightharpoonup} \xi_{0}$ in  $X^{*}$, we have $\xi_{0} \in \partial I(u_{0})$.
\begin{lemma}\label{A45}
If $A \subset X$ and $\varepsilon>0$ is such that
	$$
	(1+||u||)\lambda_{I}(u)\geq \varepsilon,\;\forall u \in A,
	$$
	then, for each $u \in A$,  there exists  $\chi_{u} \in X$ with $||\chi_{u}||=1$ 
	satisfying
	$$
	\left<l,\chi_{u}\right> \geq \frac{\varepsilon}{2}\;,\; \forall\; l \in (1+||u||) \partial I(u).
	$$
\end{lemma}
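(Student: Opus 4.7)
My plan is to exploit the Hilbert structure of $X$: via Riesz representation I view $\partial I(u)$ as a nonempty, convex, weakly closed and bounded (hence weakly compact) subset of $X$, and then let $\chi_u$ be the normalization of the element of minimum norm in the dilated set $K_u := (1+\|u\|)\partial I(u)$.

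Concretely, fix $u \in A$. Since $I$ is locally Lipschitz near $u$ with some constant $L_u$, the Clarke directional derivative obeys $I^{\circ}(u;v)\le L_u\|v\|$ for every $v \in X$, so each $\xi \in \partial I(u)$ satisfies $\|\xi\|_{\ast} \le L_u$; hence $K_u$ is bounded in $X^{\ast}$. As an intersection of weak$^{\ast}$-closed half-spaces $\{\xi:\langle\xi,v\rangle\le I^{\circ}(u;v)\}$, $\partial I(u)$ is also convex and weak$^{\ast}$-closed, and so is $K_u$. Identifying $X^{\ast}\cong X$ through Riesz, $K_u$ becomes a convex, norm-closed, bounded subset of the Hilbert space $X$, and hence weakly compact by reflexivity.

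By the weak lower semicontinuity of the norm there exists $\xi^{\ast}\in K_u$ realizing $\|\xi^{\ast}\|=\min_{l\in K_u}\|l\|$. The hypothesis $(1+\|u\|)\lambda_{I}(u)\ge \varepsilon$ translates precisely into $\|l\|_{\ast}\ge \varepsilon$ for every $l \in K_u$, so in particular $\|\xi^{\ast}\|\ge \varepsilon>0$. Set $\chi_u := \xi^{\ast}/\|\xi^{\ast}\|$; then $\|\chi_u\|=1$. For any $l \in K_u$ convexity gives $(1-t)\xi^{\ast}+tl \in K_u$ for $t\in[0,1]$, whence $\|(1-t)\xi^{\ast}+tl\|^{2}\ge\|\xi^{\ast}\|^{2}$; expanding, dividing by $t>0$ and letting $t\downarrow 0$ produces the standard projection inequality $(\xi^{\ast},l)\ge \|\xi^{\ast}\|^{2}$. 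Dividing through by $\|\xi^{\ast}\|$ yields $\langle l,\chi_u\rangle \ge \|\xi^{\ast}\|\ge \varepsilon\ge \varepsilon/2$, which is the desired estimate (with room to spare, explaining the factor $1/2$ in the statement).

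I do not anticipate a genuine obstacle. The only points that deserve a line of justification are that $\partial I(u)$ is weak$^{\ast}$-closed and bounded, which are standard facts about Clarke's generalized gradient of a locally Lipschitz functional, and the fact that the Riesz isomorphism converts weak$^{\ast}$-closedness in $X^{\ast}$ into weak closedness in $X$, which is what legitimates taking a norm projection onto $K_u$ inside the Hilbert space $X$.
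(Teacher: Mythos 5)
Your proof is correct, but it follows a genuinely different route from the paper's. The paper observes that the hypothesis forces $\overline{B}_{\varepsilon/2}(0)$ and $K_u:=(1+\|u\|)\partial I(u)$ to be disjoint closed convex sets in $X^{*}$, separates them by a Hahn--Banach functional $\psi_u\in X^{**}$, uses reflexivity to realize $\psi_u$ as an element $v_u\in X$, normalizes to get $\chi_u$, and then computes $\sup_{w\in \overline{B}_{\varepsilon/2}(0)}\langle w,\chi_u\rangle=\varepsilon/2$ to obtain the bound; the radius of the ball is exactly where the factor $1/2$ comes from. You instead take $\chi_u$ to be the normalization of the minimal-norm element $\xi^{*}$ of $K_u$ (the projection of $0$ onto the closed convex set $K_u$ after the Riesz identification) and invoke the variational inequality $(\xi^{*},l)\geq\|\xi^{*}\|^{2}$, which yields the sharper estimate $\langle l,\chi_u\rangle\geq\|\xi^{*}\|\geq\varepsilon$. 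Both arguments are sound; yours is more economical in the Hilbert setting and gives a better constant, while the paper's separation argument would survive in any reflexive Banach space. The one point you should make explicit is that your projection step uses that the norm $\|\cdot\|$ appearing in the statement (and the dual norm $\|\cdot\|_{*}$ entering $\lambda_I$) is induced by the inner product of $X$; in Section~3 the paper only assumes $\|\cdot\|$ is \emph{equivalent} to $\|\cdot\|_X$, although in the application to $H^{1}(\mathbb{R}^{N})$ the equivalent norm coming from the spectral decomposition is indeed Hilbertian, so your argument applies there without change. It is also worth stating once that $\partial I(u)$ is nonempty, convex, bounded and weak$^{*}$-closed (standard properties of Clarke's gradient), since the existence of the minimal-norm element rests on this.
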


\begin{proof}
	
	Given $u \in A$ and $l \in \partial I(u)$, we have
$$
	0<\varepsilon \leq  (1+||u||)\lambda_{I}(u) \leq  (1+||u||)||l||_{*}, 
$$
	that is, 
$$
\overline{B}_{\frac{\varepsilon}{2}}(0)\cap (1+||u||)\partial I(u)= \emptyset.
$$
Since $\overline{B}_{\frac{\varepsilon}{2}}(0)$ and $(1+||u||)\partial I(u)$ are closed sets, convex, not empty and disjoint, by Hahn-Banach Theorem there are $\psi_{u} \in X^{**} \backslash \{0\}$ and $R>0$ satisfying 
	$$
	\left<\psi_{u}, l\right> \geq R \geq \left<\psi_{u}, w\right>\;,\;\forall\; l \in (1+||u||)\partial I(u),\forall\; w \in \overline{B}_{\frac{\varepsilon}{2}}(0). 
	$$
	By  reflexivity of $X$, there exists $v_{u} \in X$ such that
	$$
	\left<\psi_{u}, l\right>=\left<l, v_{u}\right>, \;\;\forall\; l \in X^{*}.
	$$
	So, 
	\begin{equation*}
	\left<l, v_{u}\right> \geq \left<w, v_{u}\right>\;,\;\forall\; l \in (1+||u||) \partial I(u),\forall\; w \in\overline{B}_{\frac{\varepsilon}{2}}(0),
	\end{equation*}
	or yet,
	\begin{equation}\label{A43}
	\left<l, \chi_{u}\right> \geq \left<w, \chi_{u}\right>\;,\;\forall\; l \in (1+||u||)\partial I(u),\forall\; w \in \overline{B}_{\frac{\varepsilon}{2}}(0),
	\end{equation}
	where $\chi_{u}=\frac{v_{u}}{||v_{u}||}$.

	By Hahn-Banach Theorem
	$$
	1=||\chi_{u}||=\max\{\left<w, \chi_{u}\right>\;:\;w \in X^* \quad \mbox{and} \quad ||w||_{*}\leq 1\},
	$$
	then,
	$$ 
	\max\left\{\left<\frac{\varepsilon}{2}\; w, \chi_{u}\right>\;:\;w \in X^* \quad \mbox{and} \quad ||w||_{*}\leq 1\right\}= \frac{\varepsilon}{2}. 
	$$ 
	Consequently,
	\begin{equation}\label{A44}
	\max\left\{\left<\tilde{w}, \chi_{u}\right>\;:\;\tilde{w} \in  \overline{B}_{\frac{\varepsilon}{2}}(0)\right\}= \frac{\varepsilon}{2}.
	\end{equation}
	For $ l \in  \partial (1+||u||) I (u)$, (\ref{A43}) and (\ref{A44}) combine to give
	$$
	\left<l, \chi_{u}\right> \geq  \sup_{w \in \overline{B}_{\frac{\varepsilon}{2}}(0)}\left<w, \chi_{u}\right>= 
	\frac{\varepsilon}{2} , 
	$$
	that is,
	$$
	\left<l, \chi_{u}\right> \geq  \frac{\varepsilon}{2},\; \forall\; l \in (1+||u||) \partial I(u),
	$$
	finishing the proof.
\end{proof}

\begin{theorem}\label{A2}
Assume the condition $(H)$ and let $\alpha < \beta $ and $\varepsilon>0$ satisfying 
$$
\lambda_{I}(u)(1+||u||)\geq \varepsilon\;,\; \forall\; u \in I^{-1}([\alpha, \beta]).
$$ 
Then, for each $u_{0} \in I^{-1}([\alpha, \beta])$, there exists $\eta_{0}>0$ such that
$$\left<\xi,\chi_{u_{0}}\right>> \frac{\varepsilon}{3}\;,\; \forall\; \xi \in  (1+||u||)\partial I(u)\;,\; u\in B_{\eta_{0}}(u_{0}) \cap I^{-1}([\alpha, \beta]),$$ 
where $B_{\eta_{0}}(u_{0})=\left\{u \in X\;;\; |||u-u_{0}|||<\eta_{0}\right\}$ with  $\chi_{u_{0}}$ given in Lemma \ref{A45}.
\end{theorem}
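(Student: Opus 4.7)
The natural strategy is proof by contradiction combined with the weak\,$\!^*$-closure property built into hypothesis $(H)$ and the separating functional $\chi_{u_0}$ supplied by Lemma~\ref{A45}. Suppose the conclusion fails at some $u_0\in I^{-1}([\alpha,\beta])$. Then for every $n\in\mathbb{N}$ there exist $u_n\in B_{1/n}(u_0)\cap I^{-1}([\alpha,\beta])$ and $\xi_n\in(1+\|u_n\|)\partial I(u_n)$ with $\langle \xi_n,\chi_{u_0}\rangle\le\varepsilon/3$. The goal is to pass to the limit and contradict the lower bound $\langle l,\chi_{u_0}\rangle\ge\varepsilon/2$ for $l\in(1+\|u_0\|)\partial I(u_0)$ provided by Lemma~\ref{A45}.

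First I would establish that $(u_n)$ is bounded in $X$. Since $|||u_n-u_0|||\to0$ we immediately get $\|u_n^+-u_0^+\|\to0$, so $\|u_n^+\|$ is bounded. Writing $\tfrac{1}{2}\|u_n^-\|^2=\tfrac{1}{2}\|u_n^+\|^2-I(u_n)-\Psi(u_n)$ and using $I(u_n)\ge\alpha$ together with the lower boundedness of $\Psi$, the sequence $\|u_n^-\|$ is bounded as well. With $(u_n)$ bounded, the characterization (\ref{ob1}) of $\tau$-convergence gives $u_n^+\to u_0^+$ strongly and $u_n^-\rightharpoonup u_0^-$ weakly, so in particular $Q'(u_n)=(u_n^+-u_n^-)$ satisfies $Q'(u_n)\stackrel{\ast}{\rightharpoonup}Q'(u_0)$ in $X^{\ast}$. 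Passing to a subsequence I may also assume $\|u_n\|\to L$ for some $L\ge\|u_0\|$ (weak lower semicontinuity of the norm on $Y$ applied to $u_n^-$).

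Next I decompose $\xi_n=(1+\|u_n\|)\zeta_n$ with $\zeta_n=Q'(u_n)-\rho_n$ and $\rho_n\in\partial\Psi(u_n)$. Condition $(H)$ yields a uniform bound $\|\rho_n\|_{\ast}\le M$, so along a further subsequence $\rho_n\stackrel{\ast}{\rightharpoonup}\rho_0$ in $X^{\ast}$. Therefore $\zeta_n\stackrel{\ast}{\rightharpoonup}\zeta_0:=Q'(u_0)-\rho_0$, and the closure part of $(H)$ gives $\zeta_0\in\partial I(u_0)$. Since $(1+\|u_n\|)\to1+L$ is a bounded scalar sequence, I obtain $\xi_n\stackrel{\ast}{\rightharpoonup}\xi_0:=(1+L)\zeta_0$.

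Finally, Lemma~\ref{A45} applied to the element $(1+\|u_0\|)\zeta_0\in(1+\|u_0\|)\partial I(u_0)$ gives $\langle\zeta_0,\chi_{u_0}\rangle\ge\dfrac{\varepsilon}{2(1+\|u_0\|)}$, hence
\[
\langle\xi_0,\chi_{u_0}\rangle=(1+L)\langle\zeta_0,\chi_{u_0}\rangle\ge\frac{(1+L)\varepsilon}{2(1+\|u_0\|)}\ge\frac{\varepsilon}{2},
\]
using $L\ge\|u_0\|$. Passing to the limit in $\langle\xi_n,\chi_{u_0}\rangle\le\varepsilon/3$ yields $\varepsilon/2\le\varepsilon/3$, the desired contradiction. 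The main technical point I expect to be delicate is the extraction of a priori bounds on $(u_n)$ from $\tau$-convergence alone, since the $|||\cdot|||$-norm controls the $Y$-component only through a weighted series on a total orthonormal system; this is exactly where the energy level constraint $I(u_n)\in[\alpha,\beta]$ together with $\Psi$ bounded below becomes essential, and it is what lets the weak\,$\!^*$ passage to the limit in step three actually close.
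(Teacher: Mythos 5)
Your proposal is correct and takes essentially the same route as the paper: a contradiction argument in which condition $(H)$ supplies the weak$^*$ limit $\rho_0\in\partial\Psi(u_0)$, Lemma \ref{A45} gives the lower bound $\varepsilon/2$ at $u_0$, and weak lower semicontinuity of the norm ($\|u_0\|\le\liminf\|u_n\|$) transfers that bound to the limit of $\langle\xi_n,\chi_{u_0}\rangle$. The only cosmetic difference is that you extract a subsequence with $\|u_n\|\to L$ so the product converges outright, whereas the paper keeps $\liminf$'s and inserts the auxiliary observation $\langle Q'(u_n)-\rho_n,\chi_{u_0}\rangle>0$ to handle the product of lower limits.
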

\begin{proof}
	
	Arguing by contradiction, assume that there exist $(u_{n}) \subset I^{-1}([\alpha, \beta])$ with $u_{n}\stackrel{\tau}{\rightarrow} u_{0}$ in $X$ and $\xi_{n} \in (1+||u_{n}||)\partial I(u_{n})$ such that
	\begin{equation*}
	\left<\xi_{n}, \chi_{u_{0}}\right> \leq  \frac{\varepsilon}{3},\;\forall\; n \in \mathbb{N}.
	\end{equation*}
	Note that
	$$
	\xi_{n}= (1+||u_{n}||)(Q'(u_{n})- \rho_{n})
	$$
	with $\rho_{n} \in \partial \Psi(u_{n})$. 
	It follows from $(H)$ that, going to a subsequence if necessary, there is $\rho_{0} \in X^{*}$ such that $\rho_{n}\stackrel{\ast}{\rightharpoonup} \rho_{0}$ in $X^{*}$. In addition, $\rho_{0} \in \partial \Psi(u_{0})$. On the other hand, 
	using the lower limitation of $\Psi$, (\ref{Func.}), the limit $u_{n}\stackrel{\tau}{\rightarrow} u_{0}$ in $X$,  
	the fact that $I(u_{n})\geq \alpha \;\forall\;n \in \mathbb{N}$, and  (\ref{ob1}), we can assume that $u_{n}\rightharpoonup u_{0}$ in $X$. Since $\xi_{0}:=(1+||u_{0}||)(Q'(u_{0})- \rho_{0}) \in (1+||u_{0}||)\partial I(u_{0})$, by Lemma \ref{A45},  
	\begin{equation*}
	\frac{\varepsilon}{3} <	\frac{\varepsilon}{2} \leq (1+||u_{0}||)\left<Q'(u_{0})- \rho_{0}, \chi_{u_{0}}\right>.
	\end{equation*}
	Hence, without loss of generality, we can suppose that
	\begin{equation*}
	\left<Q'(u_{n})- \rho_{n}, \chi_{u_{0}}\right> >0,\;\forall\;n \in \mathbb{N}.
	\end{equation*}
	So,
	\begin{eqnarray}
\frac{\varepsilon}{3}<	(1+||u_{0}||)\left<Q'(u_{0})- \rho_{0}, \chi_{u_{0}}\right> &\leq& \liminf_{n \to +\infty}(1+||u_{n}||)\left<Q'(u_{n})- \rho_{n}, \chi_{u_{0}}\right>   \nonumber \\
	&= & \liminf_{n \to +\infty} \left<\xi_{n}, \chi_{u_{0}}\right> \leq \frac{\varepsilon}{3}, \nonumber
	\end{eqnarray}
which is absurd.  
\end{proof}

	\begin{lemma}\label{A3}
	Under the assumptions of Theorem \ref{A2}, there exist a $\tau$-open neighborhood $V$ of $I^{\beta}=\{u \in X\,:\, I(u) \leq \beta\}$ and a vector field $P:V\rightarrow X$ satisfying:
\end{lemma}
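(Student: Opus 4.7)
The plan is to construct $P$ by the classical pseudo-gradient patching argument adapted to the $\tau$-topology, in the spirit of Kryszewski--Szulkin and Li--Szulkin. First I would build an open cover of $I^{\beta}$: for each $u_{0}\in I^{-1}([\alpha,\beta])$, Theorem \ref{A2} yields a $\tau$-open ball $B_{\eta_{u_0}}(u_0)$ and a unit direction $\chi_{u_0}\in X$ such that
$$
\langle \xi, \chi_{u_{0}}\rangle > \frac{\varepsilon}{3},\;\;\forall\; \xi \in (1+\|u\|)\partial I(u),\;\; u\in B_{\eta_{u_{0}}}(u_{0})\cap I^{-1}([\alpha,\beta]).
$$
For points in the open set $I^{-1}((-\infty,\alpha))$ (where no descent is needed) I would add the $\tau$-open set $U_{0}:=\{u\in X:I(u)<\alpha\}$, which is $\tau$-open thanks to the $\tau$-upper semicontinuity of $I$. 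Together these cover a $\tau$-open neighborhood $V$ of $I^{\beta}$.

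Next, I would invoke the fact that the topology $\tau$ is induced by the metric $d(u,v)=|||u-v|||$, so $(V,\tau)$ is metrizable and hence paracompact. This lets me refine the cover above to a locally finite $\tau$-open cover $\{W_{j}\}_{j\in J}$ with associated descent directions $\chi_{j}\in X$ (with $\chi_{j}=0$ on the pieces coming from $U_{0}$), and choose a $\tau$-Lipschitz partition of unity $\{\pi_{j}\}_{j\in J}$ subordinate to $\{W_{j}\}$. Concretely one can use the Dieudonné-type formula
$$
\pi_{j}(u)=\frac{d(u,V\setminus W_{j})}{\sum_{k\in J}d(u,V\setminus W_{k})},
$$
which is $\tau$-locally Lipschitz because each $d(\cdot,V\setminus W_{k})$ is $\tau$-Lipschitz. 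Then I would set
$$
P(u):=\sum_{j\in J}\pi_{j}(u)\,\chi_{j},\quad u\in V.
$$
Since the sum is locally finite and each $\chi_{j}$ has norm at most $1$, convexity gives $\|P(u)\|\le 1$. The $\tau$-local Lipschitz continuity of $P$ follows from that of the $\pi_{j}$ and local finiteness. Finally, for $u\in V\cap I^{-1}([\alpha,\beta])$ and any $\xi\in(1+\|u\|)\partial I(u)$, only indices $j$ with $u\in W_{j}$ contribute, and each such $W_{j}$ sits inside some $B_{\eta_{u_{0,j}}}(u_{0,j})$, whence Theorem \ref{A2} gives $\langle\xi,\chi_{j}\rangle>\varepsilon/3$; taking the convex combination yields $\langle\xi,P(u)\rangle>\varepsilon/3$.

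The main obstacle I anticipate is verifying the $\tau$-local Lipschitz character of $P$ rather than just $\|\cdot\|$-local Lipschitz. The delicate point is that $\tau$-balls are unbounded in the norm, so one must work with the metric $|||\cdot|||$ throughout, use that each $\chi_{j}$ is a \emph{fixed} vector in $X$ (not varying with $u$), and exploit local finiteness to reduce estimates on $P$ to finitely many $\pi_{j}$'s at a time. The other subtle point, already handled implicitly above, is that the open set $\{I<\alpha\}$ must be adjoined to $V$ so that $P$ extends to a full $\tau$-neighborhood of $I^{\beta}$, which requires the $\tau$-upper semicontinuity of $I$ assumed at the start of this section.
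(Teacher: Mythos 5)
Your construction follows the same architecture as the paper's proof: cover $I^{-1}([\alpha,\beta])$ by the $\tau$-balls furnished by Theorem \ref{A2}, adjoin the $\tau$-open sublevel set $U_{0}=I^{-1}((-\infty,\alpha))$ (using $\tau$-upper semicontinuity), pass to a $\tau$-locally finite refinement via paracompactness of the metric space $(I^{\beta},\tau)$, and glue fixed descent vectors with a $\tau$-Lipschitz partition of unity. The one substantive point where you deviate is the normalization of the descent vectors: you take the unit vectors $\chi_{j}$ themselves, obtaining $\|P(u)\|\le 1$, whereas the paper takes $w_{i}=(1+\|u_{i}\|)\chi_{u_{i}}$ and therefore needs its Claim \ref{A4} (that $\|u_{i}\|\le 2\|v\|$ on a $\tau$-neighborhood of $u_{i}$) to convert $\|w_{i}\|=1+\|u_{i}\|$ into the stated bound $\|P(u)\|\le 1+2\|u\|$. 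Your version satisfies (P1)--(P4) exactly as stated (in (P4) the factor $(1+\|u\|)$ sits on $\xi$, so the convex combination of the inequalities $\langle\xi,\chi_{j}\rangle>\varepsilon/3$ suffices), and it is genuinely simpler since Claim \ref{A4} becomes unnecessary. Be aware, however, that the paper's scaling is deliberate: in the Cerami-type deformation argument that consumes this lemma, one wants $\langle\xi_{0},P(u)\rangle$ bounded below for \emph{unscaled} $\xi_{0}\in\partial I(u)$, which is what a vector field of size comparable to $1+\|u\|$ is designed to deliver; with $\|P\|\le 1$ the descent rate degenerates like $\varepsilon/(1+\|u\|)$ on norm-unbounded sublevel sets. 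Finally, you should state explicitly that (P2) holds (immediate from local finiteness plus the fact that the $\chi_{j}$ are finitely many fixed vectors near each point) and that $\langle\xi,P(u)\rangle\ge 0$ on all of $V$ (each term is either zero or at least $\varepsilon\pi_{j}(u)/3$); both are trivial but are part of the lemma's conclusion.
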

\begin{itemize}
	\item [(P1)] $P$ is locally Lipschitz continuous and  $\tau$-locally Lipschitz continuous,
	\item [(P2)] each point $u \in V$ has a $\tau$-neighborhood $V_{u}$ such that $P(V_{u})$ is contained in a finite-dimensional subspace of $X$,
		\item [(P3)] for $u \in V$,  $ ||P(u)|| \leq  1+2||u||$ and $\left<\xi,P(u)\right>\geq 0,\;\forall\; \xi\in (1+||u||)\partial I(u)$;
	\item [(P4)] for $u \in I^{-1}([\alpha, \beta])$
	$$\left<\xi,P(u)\right>\geq \frac{\varepsilon}{3},\;\forall\; \xi \in (1+||u||)\partial I(u).$$
\end{itemize}

\begin{proof}
	
	For each $u_{0} \in I^{-1}([\alpha, \beta])$, the Theorem \ref{A2} guarantees the existence of $\eta_{0}>0$ satisfying
	\begin{equation}\label{A5}
	\left<\xi, \chi_{u_{0}}\right> \geq \frac{\varepsilon}{3},\;\forall\; \xi \in (1+||u||)\partial I(u),\;\;u \in B_{\eta_{0}}(u_{0}) \cap I^{-1}([\alpha, \beta]).
	\end{equation}
	\begin{claim}\label{A4}
	For each $u \in I^{-1}([\alpha, \beta])$ there exists a $\tau$-neighborhood $N_{u}$ of $u$ such that
		\begin{equation}\label{Sent.}
		||u|| \leq  2||v||,\;\forall\; v \in N_{u}.
		\end{equation}
	\end{claim}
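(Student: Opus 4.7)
The claim is trivially true when $u=0$ (take $N_u=X$), so the plan is to handle the nontrivial case $u\neq 0$ by working with the metric $|||\cdot|||$ that generates $\tau$. Since $\tau$ is induced by a metric, it is first countable, and so it suffices to show that the set $\{v\in X:\|v\|<\|u\|/2\}$ does not accumulate to $u$ in $\tau$. I would argue by contradiction and suppose that there is a sequence $(v_n)\subset X$ with $v_n\stackrel{\tau}{\to}u$ and $\|v_n\|<\|u\|/2$ for every $n$.

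The first key observation is that such a sequence is automatically bounded in $\|\cdot\|$, so (\ref{ob1}) applies and gives $v_n^{+}\to u^{+}$ in norm while $v_n^{-}\rightharpoonup u^{-}$ weakly. Because $X=Y\oplus Z$ is an orthogonal decomposition, one has the Pythagorean identity $\|v_n\|^{2}=\|v_n^{+}\|^{2}+\|v_n^{-}\|^{2}$, and the same for $u$. From $\|v_n^{+}\|\le\|v_n\|<\|u\|/2$ the strong limit yields $\|u^{+}\|\le\|u\|/2$; from $\|v_n^{-}\|\le\|v_n\|<\|u\|/2$ together with the weak lower semicontinuity of $\|\cdot\|$ one obtains $\|u^{-}\|\le\liminf\|v_n^{-}\|\le\|u\|/2$.

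Combining these two bounds gives
$$
\|u\|^{2}=\|u^{+}\|^{2}+\|u^{-}\|^{2}\le \tfrac{\|u\|^{2}}{4}+\tfrac{\|u\|^{2}}{4}=\tfrac{\|u\|^{2}}{2},
$$
forcing $u=0$, a contradiction. Hence the $\tau$-open set $N_u:=\{v\in X:\|v\|\ge\|u\|/2\}^{\circ,\tau}$ contains $u$, and on $N_u$ one has $\|u\|\le 2\|v\|$, proving (\ref{Sent.}).

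The only subtle point in this plan is making sure I may argue with sequences rather than nets: this is legitimate precisely because the topology $\tau$ is defined from the norm $|||\cdot|||$, so it is metrizable. Note also that the assumption $u\in I^{-1}([\alpha,\beta])$ is not used; the statement is purely a property of the $\tau$-topology and the orthogonal splitting of $X$. No appeal to the assumption $(H)$, to $\Psi$, or to the values of $I$ is required.
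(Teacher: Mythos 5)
Your proposal is correct and follows essentially the same route as the paper: argue by contradiction with a $\tau$-convergent sequence, note it is norm-bounded so that (\ref{ob1}) applies, and conclude via weak lower semicontinuity of the norm. The only (immaterial) difference is that you apply lower semicontinuity to $\|v_n^{+}\|$ and $\|v_n^{-}\|$ separately and recombine by the Pythagorean identity, whereas the paper passes directly to $v_j\rightharpoonup u$ and uses lower semicontinuity of $\|\cdot\|$ once.
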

If $u=0$, then (\ref{Sent.}) 
is immediate. Now, for $u \in I^{-1}([\alpha, \beta])\backslash \{0\}$, suppose that (\ref{Sent.}) is not true. Then, there exists 
$$
(v_{j}) \subset B_{\frac{1}{j}}(u)=\left\{v \in X: |||v-u|||\leq \frac{1}{j}\right\}
$$ 
with
	\begin{equation*}
	2 ||v_{j}|| \leq ||u||,\;\forall\; j \in \mathbb{N}.
	\end{equation*}
As $v_{j}	\stackrel{\tau}{\rightarrow} u$ in $X$, by (\ref{ob1}), we derive that $v_{j}\rightharpoonup u$ in $X$. Consequently, 
	$$
	0<||u|| \leq \liminf_{j}||v_{j}|| \leq \frac{||u||}{2},
	$$
	which is absurd. 
	
	Consider $U_{u}= B_{\eta_{u}}(u)\cap I^{-1}([\alpha, \beta])\cap N_{u}$,  which is still a $\tau$-neighborhood of $u \in I^{-1}([\alpha, \beta])$. Since $I$ is $\tau$-upper semicontinuous, then 
	$$
	U_{0}=I^{-1}((-\infty, \alpha))
	$$ is $\tau$-open in $X$. Therefore, the family 
	$$
	\mathcal{N}:=\{U_{u}\}_{u \in I^{-1}([\alpha, \beta])}\cup U_{0}
	$$
	is a $\tau$-open covering for $I^{-1}((-\infty, \beta]):=I^{\beta}$.
	
	 In addition $(I^{\beta}, \tau)$ is a metric space, then there exists a  $\tau$-locally finite $\tau$-open covering  $\mathcal{V}=\left\{\mathcal{V}_{i}\;:\; i \in \mathcal{J}\right\}$ of $I^{\beta}$ (see \cite{Ryszard}) more fine than $\mathcal{N}$. Next, we define the $\tau$-open neighborhood of $I^{\beta}$ by 
	$$
	V= \bigcup_{i \in \mathcal{J}} \mathcal{V}_{i}
	$$
	and set $\left\{\gamma_{i}\;:\; i \in \mathcal{J}\right\}$ as being  a $\tau$-Lipschitz continuous partition of unity subordinated to $\mathcal{M}$. Employing the notations above, we set the vector field $P:V \to X$ by 
	$$
	P(u)=\sum_{i \in \mathcal{J}} \gamma_{i}(u)w_{i}, 
	$$
	where:
	\begin{itemize}
		\item If $\mathcal{V}_{i} \subseteq U_{u_i}$, we choose $w_{i}=\chi_{u_{i}}(1+||u_{i}||)$ ( where $\chi_{u_{i}} \in X$ is given in Lemma \ref{A45}).
		\item If $\mathcal{V}_{i}\subseteq U_{0}$, we choose $w_{i}=0$. 
	\end{itemize}
	The items $(P1)$, $(P2)$ and $(P4)$ follow in a similar way as done in \cite{AG}. 
\begin{itemize}
	\item [(P3)] Note that, if $u\in U_{u_{i}}$, by Claim \ref{A4}, 
	\begin{eqnarray}
	||w_{i}||=||\chi_{u_{i}}(1+||u_{i}||)||&=& (1+||u_{i}||)\nonumber\\
	&\leq& (1+2||u||). \nonumber
	\end{eqnarray}
	Therefore, given $u \in V$, 
	\begin{eqnarray}
	||P(u)|| &\leq & \sum_{i \in \mathcal{J}} \gamma_{i}(u) ||w_{i}|| \nonumber \\
	&\leq& (1+2||u||) \sum_{i \in \mathcal{J}} \gamma_{i}(u) =(1+2||u||). \nonumber
	\end{eqnarray}	
Let $u \in V$, then $u \in \mathcal{V}_{i}$ for some $i \in \mathcal{J}$.
	\begin{itemize}
		\item If $\mathcal{V}_{i} \subset U_{0}$, then
		$$(\xi, P(u))=0,\;\forall\; \xi \in (1+||u||)\partial I(u).$$
		\item If $\mathcal{V}_{i} \subset U_{u_{i}}$, for some $i \in \mathcal{J}$, by (\ref{A5}), for $\xi \in (1+||u||)\partial I(u)$, we have
		\begin{eqnarray}
		(\xi, P(u)) &=& \sum_{i \in \mathcal{J}}  \gamma_{i}(u)\left(\xi, \chi_{u_{i}}(1+||u_{i}||) \right) \nonumber \\
		&= & \sum_{i \in \mathcal{J}} (1+||u_{i}||) \gamma_{i}(u)\left(\xi, \chi_{u_{i}} \right) \nonumber \\
		&\geq & \frac{\varepsilon}{3}\sum_{i \in \mathcal{J}}  \gamma_{i}(u)= \frac{\varepsilon}{3} > 0. \nonumber
		\end{eqnarray}
	\end{itemize}
\end{itemize}
\end{proof}
Finally we are ready to state our deformation lemma, whose proof follows as in \cite{AG}.
\begin{lemma}\label{A10}(Deformation lemma)
	Under the assumptions of Lemma \ref{A3}, there exists a vector field $\eta: \mathbb{R}^{+}\times I^{\beta}\rightarrow X$ that satisfies the following properties:
\end{lemma}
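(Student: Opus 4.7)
The plan is to construct $\eta$ as the integral flow of the vector field $-P$ furnished by Lemma \ref{A3}, after a suitable cutoff and normalization. Concretely, I would pick a $\tau$-Lipschitz cutoff $\chi:V\to [0,1]$ that equals $1$ on a $\tau$-neighborhood of $I^{-1}([\alpha,\beta])$ and vanishes outside a slightly larger $\tau$-open set still contained in $V$, and then consider the Cauchy problem
\begin{equation*}
\frac{d\eta}{dt}(t,u) = -\chi(\eta(t,u))\frac{P(\eta(t,u))}{1+\|P(\eta(t,u))\|}, \qquad \eta(0,u)=u.
\end{equation*}
By (P1), the right-hand side is locally Lipschitz in $u$ for the norm topology, so standard Picard-Lindel\"of theory yields a unique maximal solution for each $u\in I^{\beta}$. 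The bound $\|P(v)\|\leq 1+2\|v\|$ from (P3), combined with the $1/(1+\|P\|)$ normalization, keeps $\|\dot\eta\|$ uniformly bounded, so a Gronwall-type argument extends every trajectory to all $t\geq 0$, producing the required map $\eta:\mathbb{R}^{+}\times I^{\beta}\to X$.

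The expected deformation properties then follow by reading them off the ODE. For the monotonicity of $I$ along the flow I would invoke (P4) together with the chain rule for locally Lipschitz functionals: on any piece of trajectory contained in $I^{-1}([\alpha,\beta])$ one has $\langle \xi, P(\eta)\rangle \geq \varepsilon/3$ for every $\xi \in (1+\|\eta\|)\partial I(\eta)$, which gives a quantitative, uniform decay of $I\circ\eta(\cdot,u)$ in $t$ and hence the usual ``push a sublevel below $\alpha$'' statement for a fixed, explicit time. Outside the support of $\chi$ the equation reduces to $\dot\eta=0$, so $\eta(t,\cdot)$ is the identity there, which delivers the invariance of sublevel sets and the fact that $\eta$ is the identity away from the region where $I$ is being deformed.

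The step I expect to be the main obstacle is the $\tau$-continuity (and $\tau$-local Lipschitz behavior) of $\eta$, which is essential because the linking scheme built from $\tau$-continuous homotopies downstream will use it. Here I would rely on property (P2): since, $\tau$-locally, $P$ takes values in a finite-dimensional subspace on which $\tau$ agrees with the norm topology, trajectories starting in a common $\tau$-small neighborhood remain in a common finite-dimensional slice, and continuous dependence on initial data in the norm sense can be transferred to the $\tau$-setting. Combining this with the $\tau$-upper semicontinuity of $I$ to keep the flow inside $V$, and with Proposition \ref{7} used through condition $(H)$ to pass to the limit on generalized gradients when comparing nearby trajectories, the complete list of properties of $\eta$ would follow exactly as in \cite{AG}.
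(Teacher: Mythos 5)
Your overall strategy---integrating the pseudo-gradient field $P$ of Lemma \ref{A3} after a cutoff, and reading the properties (a)--(c) off the ODE---is the standard Kryszewski--Szulkin-type construction, and it is the route the paper itself takes (the paper gives no proof here, deferring entirely to \cite{AG}). However, your specific normalization creates a genuine gap in property (a). Property (P4) controls $\left<\xi,P(u)\right>\geq \varepsilon/3$ only for $\xi$ in the \emph{rescaled} set $(1+||u||)\partial I(u)$; for $\zeta \in \partial I(u)$ itself this gives only $\left<\zeta,P(u)\right>\geq \frac{\varepsilon}{3(1+||u||)}$. Along your flow $\dot\eta=-\chi(\eta)P(\eta)/(1+||P(\eta)||)$ one therefore gets, on $I^{-1}([\alpha,\beta])$,
\begin{equation*}
\frac{d}{dt}I(\eta(t,u)) \;\leq\; -\frac{\varepsilon}{3\,(1+||\eta||)\,(1+||P(\eta)||)} \;\geq\; -\,C\,\frac{\varepsilon}{(1+||\eta||)^{2}},
\end{equation*}
a decay rate that is not bounded away from zero on the (unbounded) set $I^{\beta}$. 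Since $||\dot\eta||\leq 1$ gives only $||\eta(t,u)||\leq ||u||+t$, the time needed to push $I$ from $\beta$ down to $\alpha$ grows with $||u||$, so the \emph{uniform} $T$ asserted in item (a) does not follow from your ODE as written. The Cerami-type scaling built into $P$ (the factors $1+||u_{i}||$ in $w_{i}$) is there precisely so that one works with the un-normalized field $-P$, using $||P(u)||\leq 1+2||u||$ and a Gronwall/reparametrization argument to obtain global existence and a decay of $I$ that can be integrated to a quantity independent of the starting point; your division by $1+||P||$ throws that scaling away.

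Two smaller points. First, your argument for item (b) is under-specified: (P2) gives that $P$ is $\tau$-locally finite-dimensional at each point, but a trajectory segment $\{\eta(s,u):s\in[0,t]\}$ passes through many such neighborhoods, so you need a compactness/finite-covering argument along the (norm-compact) trajectory to conclude that $v\mapsto v-\eta(s,v)$ is locally finite-dimensional; the claim that nearby trajectories "remain in a common finite-dimensional slice" is not automatic. Second, the $\tau$-continuity in item (c) comes from the $\tau$-local Lipschitz continuity of $P$ in (P1) together with a Gronwall estimate in the $|||\cdot|||$-norm; condition $(H)$ and Proposition \ref{7} play no role at this stage (they are used to verify $(H)$ for the concrete functional and in the limiting arguments of Sections 4--6, not in the construction of the flow).
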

\begin{itemize}
	\item [(a)] There exists $T>0$ such that 
	$$\eta(T, I^{\beta}) \subset I^{\alpha};$$
	\item [(b)] Each point $(t,u) \in [0,T] \times I^{\beta}$ has a $\tau$-neighborhood $N_{(t,u)}$ such that
	$$\left\{v-\eta (s,v) \in N_{(t,u)}\cap  ([0,T] \times I ^{\beta}) \right\} $$
	is contained in a finite-dimensional subspace of $X$;
	\item [(c)] $\eta$ is continuous in  $[0,+\infty) \times (X,\tau)$ endowed with the norm $\|(t,u)\|_{\bigstar}=|t|+|||u|||$.
\end{itemize}

\section{Generalized linking theorem}
Let $Y$ be a separable closed subspace of a Hilbert space $X$, $Z=Y^{\perp}$ and $I:X\rightarrow \mathbb{R}$ is a locally Lipschitz functional . If $u \in X$, as in the previous section, $u^{+}$ and $u^{-}$ denote the orthogonal projections in $Z$ and $Y$, respectively. 

We say that $(u_{n})\subset X$ is a Cerami sequence at the level $c$ for $I$ ($(C)_{c}$-sequence for short), if
\begin{equation*}
I(u_{n})\rightarrow c\;\;\mbox{and}\;\; (1+||u_{n}||)\lambda_{I}(u_{n})\rightarrow 0.
\end{equation*}
	Let $U$ be an open subset of $X$. 
A homotopy $\gamma=I_{d}-g: [0,1] \times \overline{U} \rightarrow X$ is said to be admissible if:
\begin{itemize}
	\item [(a)] $\gamma$ is $\tau$-continuous, i.e., $\gamma(s_{n},u_{n}) \stackrel{\tau}{\rightarrow} \gamma(s,u)$ whenever $u_{n}\stackrel{\tau}{\rightarrow} u$ in $X$ and $s_{n}\rightarrow s$ in $\mathbb{R}$;
	\item [(b)] $g$ is $\tau$-locally finite-dimensional, i.e., for each  $(t,u) \in [0,1] \times U$ there is a  neighbourhood  $\mathcal{N}$ of $(t,u)$ in the product topology of $[0,1]$ and $(X, \tau)$ such that $g(\mathcal{N} \cap ([0,1]\times U))$ is contained in a finite-dimensional subspace of $X$.
\end{itemize}
Given $R >r>0$ and $z \in Z\backslash\{0\}$, we set
\begin{eqnarray}
&&\mathcal{M}=\left\{u=y+t z\;; ||u||\leq R, t \geq 0 \;\;\mbox{and}\;\; y \in Y\right\} \nonumber \\
&& \mathcal{M}_{0}=\left\{u=y+t z\;;y \in Y, ||u||= R\;\;\mbox{and}\;\; t \geq 0 \;\;\mbox{or}\;\;  ||u||\leq  R\;\;\mbox{and}\;\; t=0 \right\} \nonumber \\
&& S=\left\{u \in Z\;; ||u||=r\right\} \nonumber
\end{eqnarray} 
and
\begin{eqnarray}
 &\Gamma:=\{\gamma\in C([0,1] \times \mathcal{M}, X): \gamma \;\mbox{is admissible},\; \gamma(0,u)=u\nonumber \\ 
&\mbox{and}\; I(\gamma(s,u)) \leq \max\{I(u), -1 \},\forall\; s \in [0,1] \}. \nonumber
\end{eqnarray}
\begin{theorem}(Linking theorem).\label{A85}
Let $X=Y \oplus Z$ be a separable Hilbert space with $Y$ orthogonal to $Z$. Suppose that:
	\begin{itemize}
		\item [(i)]  $I \in Lip_{loc}(X, \mathbb{R})$ is $\tau$-upper semicontinuous with
		$$
		I(u)=\frac{1}{2}||u^{+}||^{2}-\frac{1}{2}||u^{-}||^{2}-\Psi(u),
		$$
		where  $\Psi \in Lip_{loc}(X, \mathbb{R})$ is bounded from below.
		\item [(ii)] There exists $z_{0} \in Z\backslash \{0\}$, $\delta>0$ and $R>r>0$ such that $I\lvert_{S} \geq \delta$ and $I\lvert_{\mathcal{M}_{0}} \leq 0$.
	\end{itemize}
	If $I$ satisfies the condition $(H)$, then there exists a sequence $(C)_{c}$ for $I$, where 
	$$c=\inf_{\gamma \in \Gamma} \sup_{u \in \mathcal{M}}I(\gamma(1,u)).$$
	In addition, $c\geq \delta$.
\end{theorem}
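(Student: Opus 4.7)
The plan is to prove Theorem \ref{A85} by the standard two-part scheme used for generalized linking theorems of Kryszewski--Szulkin type: first establish the intersection (linking) property so that the minimax value $c$ is bounded below by $\delta$, and then use the deformation lemma of the previous section to produce a Cerami sequence at level $c$ by contradiction.

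For the linking/intersection step, I would show that for every $\gamma\in\Gamma$ one has $\gamma(1,\mathcal{M})\cap S\neq\emptyset$. Writing $\gamma(s,u)=u-g(s,u)$ with $g$ $\tau$-locally finite-dimensional and using the compactness of $\mathcal{M}$ in the $\tau$-topology (which follows because $Y$ is separable and the bounded $\tau$-convergence is weak convergence in $Y$ plus norm convergence in $Z$), there exists a single finite-dimensional subspace $Y_0\subset Y$ with $\mathbb{R}z_0\subset Y_0\oplus\mathbb{R}z_0$ such that $g(\{1\}\times \mathcal{M})\subset Y_0\oplus\mathbb{R}z_0$. Restricting the projection $u\mapsto P_{Y_0}u^{-}+(u^{+},z_0)z_0/\|z_0\|^{2}$ composed with $\gamma(1,\cdot)$ to $\mathcal{M}\cap(Y_0\oplus \mathbb{R}z_0)$ gives a continuous map of a finite-dimensional convex body to itself which agrees with the identity on the relative boundary $\mathcal{M}_0\cap(Y_0\oplus\mathbb{R}z_0)$ (because $I|_{\mathcal{M}_0}\le 0<\delta$ forces those points to stay outside $S$, so the required homotopy invariance applies). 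A standard Brouwer degree computation yields $\deg(\cdot,\mathcal{M}\cap(Y_0\oplus\mathbb{R}z_0),rz_0/\|z_0\|)\neq 0$, so there is $u\in\mathcal{M}$ with $\gamma(1,u)\in S$. Combined with $I|_S\geq\delta$ this gives $c\geq\delta$.

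For the existence of the Cerami sequence, I would argue by contradiction. Suppose that no $(C)_c$ sequence exists; then there exists $\varepsilon\in(0,c/2)$ such that
\begin{equation*}
(1+\|u\|)\lambda_{I}(u)\geq \varepsilon\quad\text{for all }u\in I^{-1}([c-\varepsilon,c+\varepsilon]).
\end{equation*}
Applying Lemma \ref{A10} with $\alpha=c-\varepsilon$ and $\beta=c+\varepsilon$ produces $T>0$ and a map $\eta\colon[0,T]\times I^{\beta}\to X$ with $\eta(T,I^{\beta})\subset I^{\alpha}$, which is $\tau$-continuous in the sense of (c), $\tau$-locally finite-dimensional modulo the identity in the sense of (b), and along which $I$ is non-increasing (this is a standard consequence of (P3)--(P4) since $\eta$ is the flow associated with the vector field $P$). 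By the definition of $c$, choose $\gamma_{0}\in\Gamma$ with $\sup_{u\in\mathcal{M}}I(\gamma_{0}(1,u))\leq c+\varepsilon/2<\beta$. Define
\begin{equation*}
\tilde{\gamma}(s,u)=\begin{cases}\gamma_{0}(2s,u),&s\in[0,1/2],\\ \eta\bigl(2(s-1/2)T,\gamma_{0}(1,u)\bigr),&s\in[1/2,1],\end{cases}
\end{equation*}
which is well-defined because $\gamma_{0}(1,u)\in I^{\beta}$.

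Finally I would verify that $\tilde{\gamma}\in\Gamma$ and derive the contradiction. Admissibility (the $\tau$-continuity and $\tau$-local finite-dimensionality of $\tilde\gamma=I_d-\tilde g$) is obtained by concatenating the corresponding properties of $\gamma_0$ with those of $\eta$ given in Lemma \ref{A10}(b)-(c). The energy estimate $I(\tilde\gamma(s,u))\le \max\{I(u),-1\}$ holds on $[0,1/2]$ since $\gamma_0\in\Gamma$, and on $[1/2,1]$ because $I$ is non-increasing along $\eta$ and $I(\gamma_0(1,u))\le \max\{I(u),-1\}$. Then $\eta(T,\cdot)$ sends $\gamma_{0}(1,\mathcal{M})\subset I^{\beta}$ into $I^{\alpha}=I^{c-\varepsilon}$, hence $\sup_{u\in\mathcal{M}}I(\tilde{\gamma}(1,u))\le c-\varepsilon<c$, contradicting the infimum definition of $c$. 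The main obstacle I anticipate is the linking step: because $Y$ is infinite-dimensional one cannot apply Brouwer degree directly, and the argument must carefully exploit the $\tau$-local finite-dimensionality of $g$ on $\mathcal{M}$ together with the behaviour on $\mathcal{M}_0$ to reduce to a finite-dimensional model on which the degree is computable; a secondary (more routine) obstacle is checking that the concatenated $\tilde\gamma$ inherits both admissibility conditions from $\gamma_0$ and $\eta$.
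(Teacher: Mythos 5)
Your proposal follows essentially the same route as the paper: the Cerami sequence is obtained by contradiction, applying the deformation lemma (Lemma \ref{A10}) on $I^{-1}([c-\varepsilon,c+\varepsilon])$ and concatenating a near-optimal admissible homotopy with the deformation $\eta$ to violate the definition of $c$. The only difference is that for $c\geq\delta$ the paper simply cites Li--Szulkin, while you sketch the underlying finite-dimensional reduction and Brouwer degree argument; your sketch is consistent with that cited proof.
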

\begin{proof}

The inequality $c\geq \delta$ follows  as in 	\cite{G.B Li}. Suppose by contradiction that there is $\varepsilon>0$ such that
\begin{equation*}
(1+||u||)||\lambda_{I}(u)|| \geq \varepsilon,\;\forall\; u \in I^{-1}([c-\varepsilon, c+\varepsilon]).
\end{equation*}
As $I$ verifies the condition $(H)$, we can employ the  Lemma \ref{A10} with $\alpha=c-\varepsilon$ and $\beta=c+\varepsilon$. 

Since 
$$
c= \inf_{\gamma \in \Gamma} \sup_{u \in \mathcal{M}} I(\gamma(1,u)),
$$
there is $\tilde{\gamma} \in \Gamma$ such that $\tilde{\gamma}(\{1\}\times \mathcal{M}) \subset I^{c+\varepsilon}$. Thereby, by Lemma \ref{A10}, there exists $T>0$ such that
\begin{equation}\label{A12}
I(\eta(T, \tilde{\gamma}(1,u))) \leq c-\varepsilon, \quad \forall u \in I^{c+\varepsilon}.
\end{equation}
Moreover, according to Lemma \ref{A10}, $\eta:[0, T]\times I^{c+\varepsilon}\rightarrow X$ is an admissible homotopy.

Now, let us consider the homotopy $\gamma:[0,1]\times \mathcal{M}\rightarrow X$ defined as follows:
\begin{eqnarray} 
\gamma(s,u)=\left\{\begin{array}{c}
\tilde{\gamma}(2s,u),\;\;\;\;\;\;\;\;\;0\leq s\leq \frac{1}{2}  \nonumber\\
\eta\left( T(2s-1), \tilde{\gamma}(1,u)\right),\;\; \frac{1}{2} \leq s \leq 1.
\end{array}
\right.
\end{eqnarray}
Then $\gamma \in \Gamma$, and by (\ref{A12}),
$$
\sup_{u \in \mathcal{M}} I(\gamma(1,u)) \leq c-\varepsilon,
$$ 
which contradicts the definition of $c$.
\end{proof}
	\begin{corollary}\label{N4}
	Under the hypotheses of Theorem \ref{A85} and assuming that
	\begin{equation*}
	c=\sup_{u \in \mathcal{M}} I(u),
	\end{equation*}
	there is $v \in \mathcal{M}$ such that $I(v)=c$ and $0 \in \partial I(v)$.
\end{corollary}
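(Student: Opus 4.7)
The plan is to prove Corollary~\ref{N4} in three stages: first show that the supremum $c$ is attained on $\mathcal{M}$, then suppose no maximizer is a critical point and derive a uniform gradient lower bound in a $\tau$-neighborhood of the set of maximizers, and finally construct a localized deformation that would strictly lower the minimax value, contradicting Theorem~\ref{A85}.

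\textbf{Attainment.} Writing $u = y+tz_0 \in \mathcal{M}$ so that $u^- = y$ and $u^+ = tz_0$, and noting that $\mathcal{M}$ is bounded, I would extract from any maximizing sequence $u_n = y_n + t_n z_0$ subsequences $y_n \rightharpoonup y_*$ in $Y$ and $t_n \to t_* \geq 0$. By~\eqref{ob1}, $u_n \stackrel{\tau}{\to} v := y_* + t_* z_0$, and weak lower semicontinuity of the norm keeps $v \in \mathcal{M}$. The $\tau$-upper semicontinuity of $I$ then forces $I(v) \geq \limsup I(u_n) = c$, and equality follows since $I \leq c$ on $\mathcal{M}$. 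Let $M_c := \mathcal{M} \cap I^{-1}(\{c\}) = \mathcal{M} \cap \{I \geq c\}$, which is $\tau$-closed in the $\tau$-compact $\mathcal{M}$, hence $\tau$-compact.

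\textbf{Local gradient bound.} Suppose for contradiction that $0 \notin \partial I(v)$ for every $v \in M_c$. I claim there exist $\varepsilon > 0$ and a bounded $\tau$-open set $W \supset M_c$ with $(1+\|u\|)\lambda_I(u) \geq \varepsilon$ for all $u \in W \cap I^{-1}([c-\varepsilon, c+\varepsilon])$. Were this false, I could produce $(u_n) \subset I^{-1}([c-\varepsilon_n, c+\varepsilon_n])$ lying in shrinking bounded $\tau$-neighborhoods of $M_c$ with $\xi_n \in \partial I(u_n)$ and $\|\xi_n\|_* \to 0$; by $\tau$-compactness of $M_c$, extract $u_n \stackrel{\tau}{\to} v \in M_c$, and condition $(H)$ then forces $\xi_n \stackrel{\ast}{\rightharpoonup} 0 \in \partial I(v)$, contradicting the choice of $v$.

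\textbf{Localized deformation and contradiction.} Since $\mathcal{M} \setminus W$ is $\tau$-compact and disjoint from $M_c$, $\tau$-upper semicontinuity gives $c' := \sup_{\mathcal{M} \setminus W} I < c$. Choose $\varepsilon' \in (0, \min\{\varepsilon, c-c'\})$. Adapting Lemma~\ref{A3}, I would build a pseudogradient $\tilde P = \phi P$, with $\phi$ a $\tau$-Lipschitz cutoff equal to $1$ on a small $\tau$-neighborhood of $M_c$ and vanishing outside $W$, that is $\tau$-locally Lipschitz, $\tau$-locally finite-dimensional, bounded by $1 + 2\|u\|$, a descent direction where $\phi > 0$, and identically zero outside $W$. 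Integrating $-\tilde P$ yields a deformation $\eta$ that coincides with the identity outside $W$, is admissible and $I$-non-increasing, and (for $T$ large) sends $W \cap I^{c+\varepsilon'}$ into $I^{c-\varepsilon'}$. Setting $\gamma(s,u) := \eta(sT, u)$ for $u \in \mathcal{M}$ produces $\gamma \in \Gamma$ with $\sup_{u \in \mathcal{M}} I(\gamma(1,u)) \leq \max\{c-\varepsilon', c'\} < c$, contradicting the characterization $c = \inf_\gamma \sup_u I(\gamma(1,u))$ from Theorem~\ref{A85}.

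\textbf{Main obstacle.} The delicate part is the construction of the localized deformation: the pseudogradient $\tilde P$ must supply a uniform descent rate (of order $\varepsilon/3$) on a $\tau$-neighborhood of $M_c$ while vanishing outside $W$, so that the resulting $\gamma$ coincides with the identity on $\mathcal{M} \setminus W$ and remains admissible. I would handle this with nested $\tau$-open neighborhoods $W_1 \subset W_2 \subset W$ of $M_c$ and a bump $\phi \equiv 1$ on $W_1$, $\phi \equiv 0$ outside $W_2$; any flowline leaving $W_1$ must cross the annulus $W_2 \setminus W_1$, where the bound $(1+\|u\|)\lambda_I(u) \geq \varepsilon$ still forces descent, so $\varepsilon'$ can be chosen small enough that the required drop of $I$ already occurs inside $W_1$.
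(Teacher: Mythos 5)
Your strategy is sound and it is genuinely different from the paper's. The paper argues by contradiction without first isolating the maximizer set: assuming $K_c\cap\mathcal{M}=\emptyset$, it proves (Claim \ref{N2}) the lower bound $\lambda_I(u)\geq\varepsilon$ only for $u\in\mathcal{M}\cap I^{-1}([c-\varepsilon,c+\varepsilon])$ --- the point $u\in\mathcal{M}$ with $I(u)=c$ and $0\in\partial I(u)$ is produced \emph{inside} that contradiction argument from an almost-critical sequence in $\mathcal{M}$, via Fatou and condition $(H)$, much as in your attainment plus gradient-bound steps --- and then feeds this directly into the global deformation Lemma \ref{A10}. You instead establish attainment first, upgrade the gradient bound to a full $\tau$-neighborhood $W$ of the $\tau$-compact maximizer set $M_c$, and localize the deformation. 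This buys something real: Lemma \ref{A10} requires the bound $(1+\|u\|)\lambda_I(u)\geq\varepsilon$ on all of $I^{-1}([\alpha,\beta])$, not merely on $\mathcal{M}$, so a neighborhood bound of your type is exactly what an honest deformation needs; the price is that you must build the cutoff flow yourself. One wording repair: a nonempty ``bounded $\tau$-open set'' does not exist, since every $|||\cdot|||$-ball is norm-unbounded in the $Y$-direction. Take $W$ to be a $\tau$-neighborhood of $M_c$ intersected with $I^{-1}([c-\varepsilon,c+\varepsilon])$ and deduce norm-boundedness from $I\geq c-\varepsilon$, $\|u^+\|\leq R+r$ and $\inf\Psi>-\infty$, which is the device already used in the proof of Theorem \ref{A2}.

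The one step that does not yet close is the localized deformation, for two reasons. First, you need $\mathcal{M}\cap I^{-1}((c-\varepsilon',c])\subset W_1$, not merely $\subset W$: a point of $\mathcal{M}$ with $I(u)>c-\varepsilon'$ sitting where $\phi=0$ is never moved. This follows from the same compactness argument you used to get $c'<c$, now applied to $\mathcal{M}\setminus W_1$. Second, the claim that ``any flowline leaving $W_1$ must cross the annulus, where the bound still forces descent'' is not literally correct: in $W_2\setminus W_1$ the descent rate is damped by $\phi$, which degenerates near $\partial W_2$, so a trajectory can decelerate and linger in the annulus without either dropping by $\varepsilon'$ or exiting $W_2$. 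The standard repair is to bound displacement by dissipation: since $\|\phi P\|\leq\phi(1+2\|u\|)$ while $\langle\xi,\phi P(u)\rangle\geq\phi\varepsilon/3$ for all $\xi\in(1+\|u\|)\partial I(u)$, the $|||\cdot|||$-distance travelled up to time $t$ is at most $C\bigl(I(u)-I(\eta(t,u))\bigr)$ with $C$ depending only on $\varepsilon$ and the norm bound on $W$. Hence if the energy has dropped by less than $\varepsilon'$, the trajectory has moved by less than $C\varepsilon'$ and, for $\varepsilon'$ small, has never left $W_1$, where $\phi\equiv1$ and the descent rate is bounded below --- a contradiction for $T$ large. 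With that estimate in place, the rest of your argument (admissibility of $\gamma$, and $\sup_{u\in\mathcal{M}}I(\gamma(1,u))\leq\max\{c-\varepsilon',c'\}<c$ contradicting Theorem \ref{A85}) goes through.
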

\begin{proof}
Seeking for a contradiction, we suppose that $\mathcal{M} \cap K_{c}=\emptyset$ where
	\begin{equation*}
	K_{c}=\{u \in X: I(u)=c\;\;\mbox{and}\;\;0 \in \partial I(u)\}.
	\end{equation*}
	\begin{claim}\label{N2}
		There exists $\varepsilon>0$ such that
		\begin{equation*}
		\lambda_{I}(u)\geq \varepsilon,\;\forall\; u \in I^{-1}([c-\varepsilon, c+\varepsilon])\cap \mathcal{M}.
		\end{equation*}
	\end{claim}
	In fact, otherwise there is $(u_{n})\subset \mathcal{M}$ such that
	\begin{equation*}
	\lambda_{I}(u_{n})\rightarrow 0\;\;\mbox{and}\;\; I(u_{n})\rightarrow c.
	\end{equation*}
As $||u_{n}||\leq R$ and $X$ is a Hilbert space, going to a subsequence if necessary, there is $u \in X$ such that
	\begin{equation*}
	u_{n}\rightharpoonup u \;\;\mbox{in}\;\;X.
	\end{equation*}
	Hence, $||u||\leq \displaystyle \liminf_{n \to +\infty}||u_{n}||\leq R$.
	
	On the other hand, using the fact that $u_{n}=u_{n}^{-}+t_{n}u_{0}^{+}$, it follows that $(t_n) \subset [0, +\infty)$ and $(u_{n}^{-})\subset X^{-}$ are bounded sequences. Thus,  going to a subsequence if necessary, there are $u^{-} \in X^{-}$ and $t_{0}\in [0, +\infty)$ such that
	\begin{equation}\label{N10}
	u_{n}^{-}\rightharpoonup u^{-}\;\;\mbox{and}\;\; t_{n}\rightarrow t_{0},
	\end{equation}
	then  $u_{n}\rightharpoonup u^{-}+t_{0}u_{0}^{+}$, and so, $u=u^{-}+t_{0}u_{0}^{+}$ and $||u||\leq R$, that is, $u \in \mathcal{M}$. 
	In addition, using (\ref{N10}), we deduce that $c \leq I(u)$,  because by Fatou's lemma
	\begin{eqnarray}
	c&=&\limsup_{n \to +\infty} I(u_{n}) \nonumber\\
	&=& \limsup_{n \to +\infty}\left(\frac{t_{n}^{2}}{2}||u_{0}^{+}||^{2}- \frac{1}{2}||u_{n}^{-}||^{2}-\Psi(u_{n})\right) \nonumber \\
	&=& \frac{t_{0}^{2}}{2}||u_{0}^{+}||^{2}- \frac{1}{2}\liminf_{n} ||u_{n}^{-}||^{2}- \liminf_{n} \Psi(u_{n}) \nonumber \\
	&\leq  & \frac{t_{0}^{2}}{2}||u_{0}^{+}||^{2}- \frac{1}{2}||u^{-}||^{2}- \Psi(u)=I(u).\nonumber
	\end{eqnarray}
	Finally, since $I(u)\leq \displaystyle\sup_{\mathcal{M}} I=c$, we can conclude that $I(u)=c$.
	
	Now we are ready to show that $0 \in \partial I(u)$. 
	First of all note that $u_{n}\stackrel{\tau}{\rightarrow} u$ in $X$, because $(u_{n})\subset X$ is bounded,  
	$$
	u_{n}^{+}\rightarrow u^{+}\;\;\mbox{and}\;\; u_{n}^{-}\rightharpoonup u^{-} \quad \mbox{in} \quad X.
	$$
	In what follows, let us set $w_{n} \in \partial I(u_{n})$ with $\lambda_{I}(u_{n})=||w_{n}||_{*}$ and $\rho_{n} \in \partial \Psi(u_{n})$ satisfying 
	\begin{equation*}
	\left<w_{n}, \varphi \right>= \left<Q'(u_{n}), \varphi \right>- \left<\rho_{n}, \varphi \right>,\;\forall\;\varphi \in  X \quad \mbox{and} \quad \forall\; n \in \mathbb{N}.
	\end{equation*}
Since $	\lambda_{I}(u_{n})\rightarrow 0$ and $u_{n}\rightharpoonup u$ in $X$, 
	\begin{equation*}
	\left<\rho_{n}, \varphi \right>\rightarrow  \left<Q'(u), \varphi \right> ,\;\forall\;\varphi \in  X,
	\end{equation*}
	that is, $\rho_{n} \stackrel{\ast}{\rightharpoonup}Q'(u)$ in $X^{*}$. Therefore, $Q'(u) \in \Psi(u)$ (condition (H)), that is, $Q'(u)=\rho$ for some $\rho \in \Psi(u)$, then   $0 \in \partial I(u)$. Therefore, $ u \in K_{c}\cap \mathcal{M}$, which is absurd because we are supposing that $K_{c}\cap \mathcal{M} = \emptyset$. This proves the Claim \ref{N2}.
	
	By hypothesis, $I(u)\leq c$ for all $u \in \mathcal{M}$. Thus, Claim \ref{N2} together with Lemma \ref{A10} yield there is $T>0$ such that 
	\begin{equation}\label{N3}
	I(\eta(T, u))\leq c-\varepsilon, \quad \forall u \in \mathcal{M}. 
	\end{equation}
	Now, let us consider the homotopy $\overline{\gamma}:[0,1]\times \mathcal{M}\rightarrow X$ defined by 
	\begin{eqnarray} 
	\overline{\gamma}(s,u)=\left\{\begin{array}{c}
	u,\;\;\;\;\;\;\;\;\;0\leq s\leq \frac{1}{2}  \nonumber\\
	\eta\left( T(2s-1), u\right),\;\; \frac{1}{2} \leq s \leq 1.
	\end{array}
	\right.
	\end{eqnarray}
	Analogous to what was done in Theorem \ref{A85}, we have $\overline{\gamma} \in \Gamma$. In addition, (\ref{N3}) 
	ensures that $\overline{\gamma}(\{1\} \times \mathcal{M}) \subset I^{c-\varepsilon}$, that is,
	$$\sup_{u \in \mathcal{M}} I(\overline{\gamma}(1,u)) \leq c-\varepsilon,$$  
	which contradicts the definition of $c$.
\end{proof}

\section{Proof of Theorem \ref{Teorema1}}

In order to prove Theorem \ref{Teorema1}, we would like point out that the same arguments explored in \cite{AG} guarantee that the energy function $I$ associated with problem $(P)$, see (\ref{Functional}), satisfies the condition $(H)$.

In what follows, our goal is to show that functional $I$ verifies the link geometry of the Theorem \ref{A85} with $X=H^{1}(\mathbb{R}^N)$, $Z=E^+$ and $Y=E^-$.
\begin{lemma}\label{A25}
	Suppose $(H2)-(H3)$. Then, there are $\beta>0$ and $r>0$ such that
	$$\inf_{u \in S}I(u) \geq \beta.$$
\end{lemma}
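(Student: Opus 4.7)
\medskip
\noindent\emph{Proof plan for Lemma \ref{A25}.} The plan is the standard mountain-pass style estimate on the positive spectral subspace $E^{+}=Z$, the only slightly delicate point being that $f$ is only asymptotically linear, so I must manufacture a super-quadratic term from $(H2)$ and $(H3)$ in order to absorb the linear contribution.

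First I would establish a pointwise growth estimate on $F$. By $(H2)$, given any $\varepsilon>0$ there exists $\delta=\delta(\varepsilon)>0$ such that $|f(x,t)|\le \varepsilon|t|$ whenever $|t|\le \delta$. On the other hand, from $(H3)$ one has $f(x,t)/t\to V_{\infty}(x)$ as $|t|\to\infty$ uniformly in $x$ (combined with the local bound in $(H2)$ on the intermediate range $|t|\in[\delta,R]$), giving $|f(x,t)|\le C|t|$ for all $t\in\mathbb{R}$, as already recorded in \eqref{P1*}. Fix any $p\in(2,2^{*})$. For $|t|\ge \delta$ we may write $|t|\le \delta^{2-p}|t|^{p-1}$, hence
\begin{equation*}
|f(x,t)|\le \varepsilon |t|+C_{\varepsilon,p}|t|^{p-1}\quad \text{for all }(x,t)\in\mathbb{R}^{N}\times\mathbb{R},
\end{equation*}
and integrating,
\begin{equation*}
|F(x,t)|\le \frac{\varepsilon}{2}t^{2}+\frac{C_{\varepsilon,p}}{p}|t|^{p}.
\end{equation*}

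Next I would specialize to $u\in S$, i.e.\ $u\in E^{+}$ with $\|u\|=r$. Since $u^{-}=0$, the representation \eqref{Functional} reduces to
\begin{equation*}
I(u)=\frac{1}{2}\|u\|^{2}-\Psi(u)=\frac{1}{2}r^{2}-\int_{\mathbb{R}^{N}}F(x,u)\,dx.
\end{equation*}
Applying the pointwise bound, together with the spectral inequality \eqref{A83} (which gives $\|u\|_{2}^{2}\le \|u\|^{2}/\mu_{1}$ on $E^{+}$) and the continuous Sobolev embedding $H^{1}(\mathbb{R}^{N})\hookrightarrow L^{p}(\mathbb{R}^{N})$ (transferred to the equivalent norm $\|\cdot\|$, yielding $\|u\|_{p}\le C_{p}\|u\|$), I obtain
\begin{equation*}
I(u)\ge \frac{1}{2}r^{2}-\frac{\varepsilon}{2\mu_{1}}r^{2}-C'r^{p}.
\end{equation*}

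Finally I would choose the parameters. Taking $\varepsilon=\mu_{1}/2$ yields
\begin{equation*}
I(u)\ge \frac{1}{4}r^{2}-C'r^{p},\qquad u\in S,
\end{equation*}
and since $p>2$, selecting $r>0$ small enough produces $\beta:=\frac{1}{4}r^{2}-C'r^{p}>0$ and concludes the proof. There is no real obstacle here; the only care needed is the passage from the asymptotic linearity to a super-quadratic tail, which is handled by interpolating against $|t|^{p-1}$ on the region $|t|\ge\delta$.
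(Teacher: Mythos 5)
Your proposal is correct and follows essentially the same route as the paper: the authors likewise combine $(H2)$ with the global linear bound \eqref{P1*} to get the estimate $|F(x,t)|\le \varepsilon t^{2}+C_{\varepsilon}|t|^{p}$, then use the spectral inequality \eqref{A83} and the Sobolev embedding to conclude $I(u)\ge \left(\tfrac{\mu_{1}-2\varepsilon}{2\mu_{1}}\right)\|u\|^{2}-\tilde{C}_{\varepsilon}\|u\|^{p}$ on $S$ and pick $r$ small. Your explicit interpolation $|t|\le\delta^{2-p}|t|^{p-1}$ on $\{|t|\ge\delta\}$ is just a spelled-out version of the step the paper leaves implicit.
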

\begin{proof}
	
	From (\ref{A83}), for $0<\varepsilon<\frac{\mu_{1}}{2}$, the Sobolev continuous embedding together with (\ref{P1}) leads to 
	\begin{eqnarray}
	I(u)&=&\frac{1}{2}||u||^{2}- \int_{\mathbb{R}^{N}}F(x,u)\;dx \nonumber\\
	&\geq& \frac{1}{2}||u||^{2}- \frac{\varepsilon}{\mu_{1}}||u||^{2}- \tilde{C_{\varepsilon}}||u||^{p} \nonumber \\
	&=& \left(\frac{\mu_{1}-2\varepsilon}{\mu_{1}}\right)\frac{||u||^{2}}{2}- \tilde{C_{\varepsilon}}||u||^{p}. \nonumber
	\end{eqnarray}
	From this, there are $\beta, r> 0$ of a such way  that
	\begin{equation*}
	I(u)\geq\beta>0,\; \mbox{for} \quad ||u||=r.
	\end{equation*}
\end{proof}

\begin{lemma}\label{A22}
	Suppose $(H2)-(H3)$. If $z_{0} \in E^{+}\backslash \{0\}$ is such that
	\begin{equation}\label{A52}
	\tau^{2} ||z_{0}||^{2}-||y||^{2}- \int_{\mathbb{R}^{N}}V_{\infty}(x)(\tau z_{0}+y)^{2}\;dx<0, \quad \forall \tau>0 \quad \mbox{and} \quad y \in X^{-}, 
	\end{equation}
	 then
	\begin{equation*}
	\sup_{u \in \mathcal{M}_{0}}I(u) \leq 0.
	\end{equation*}
\end{lemma}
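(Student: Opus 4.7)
I split $\mathcal{M}_{0}$ into its two branches. For the base piece ($t=0$, $u=y\in E^{-}$, $\|y\|\le R$), $u^{+}=0$ so from (\ref{Functional}),
$$
I(y)=-\tfrac{1}{2}\|y\|^{2}-\int_{\R^{N}}F(x,y)\,dx\le 0,
$$
using $F(x,\cdot)\ge 0$, which is available from $(H4)$ (a standing hypothesis of the paper, even if not cited in this lemma's statement).

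\medskip

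For the lateral piece ($\tau>0$, $\|u\|=R$), I would use the asymptotic decomposition from $(H3)$, namely $f(x,t)=V_{\infty}(x)t+f_{\infty}(x,t)$, which integrates to $F(x,t)=\tfrac{1}{2}V_{\infty}(x)t^{2}+F_{\infty}(x,t)$ with $F_{\infty}(x,t)/t^{2}\to 0$ uniformly in $x$ as $|t|\to\infty$. Plugging into (\ref{Functional}) yields
$$
2I(u)=\Bigl[\tau^{2}\|z_{0}\|^{2}-\|y\|^{2}-\int_{\R^{N}}V_{\infty}(\tau z_{0}+y)^{2}\,dx\Bigr]-2\int_{\R^{N}}F_{\infty}(x,\tau z_{0}+y)\,dx.
$$
The bracket is strictly negative by the hypothesis (\ref{A52}). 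Since it is homogeneous of degree two in $u=\tau z_{0}+y$, I would upgrade this pointwise negativity to a uniform bound of the form $\text{bracket}\le -c_{0}\|u\|^{2}$ by minimizing on the unit sphere of $\mathbb{R}_{\ge 0}z_{0}\oplus E^{-}$: for any minimizing sequence, weak lower semicontinuity of $\int V_{\infty}(\cdot)^{2}\,dx$ together with $V_{\infty}\ge\mu>\mu_{1}$ rules out loss of negativity at the weak limit (even if the limit is zero, the term $-\|y_{n}\|^{2}$ alone pushes the bracket below $-1$).

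\medskip

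It remains to absorb the $F_{\infty}$-integral: choosing $R$ large and using the uniform asymptotic $F_{\infty}(x,t)=o(t^{2})$, one expects $\int_{\R^{N}} F_{\infty}(x,u)\,dx=o(R^{2})$ uniformly over $\mathcal{M}_{0}$, giving $I(u)\le -\tfrac{c_{0}}{2}R^{2}+o(R^{2})\le 0$. \textbf{Main obstacle.} This uniform $o(R^{2})$ estimate is the delicate step, since a priori only the crude global bound $|F_{\infty}(x,t)|\le Ct^{2}$ is available from $(H2)$--$(H3)$. To justify it I would argue by contradiction with a rescaled sequence $w_{n}=u_{n}/\|u_{n}\|$, extract a weak limit, and invoke the $\mathbb{Z}^{N}$-periodicity of $V_{\infty}$ via translations (Lions-type concentration) to recover a nontrivial pointwise limit profile on which the asymptotic $F_{\infty}/t^{2}\to 0$ can actually be exploited through dominated convergence.
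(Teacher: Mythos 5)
Your treatment of the base piece ($t=0$, using $F\ge 0$ from $(H4)$) and your reduction of the quadratic part to a uniform bound $\tau^{2}\|z_{0}\|^{2}-\|y\|^{2}-\int V_{\infty}(\tau z_{0}+y)^{2}\,dx\le -c_{0}\|u\|^{2}$ on the cone $\mathbb{R}_{+}z_{0}\oplus E^{-}$ are both correct; the latter follows from the weak lower semicontinuity argument you indicate, together with the identity $\|y_{n}\|^{2}=1-\tau_{n}^{2}\|z_{0}\|^{2}$ on the unit sphere of the cone. The gap is in the step you yourself flag as the main obstacle, and it is not merely delicate: the estimate $\int_{\mathbb{R}^{N}}F_{\infty}(x,u)\,dx=o(\|u\|^{2})$ uniformly on the cone is \emph{false}. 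By $(H2)$ one has $f(x,t)/t\to 0$ as $t\to 0$, hence $f_{\infty}(x,t)/t\to -V_{\infty}(x)$ and $F_{\infty}(x,t)\approx -\tfrac{1}{2}V_{\infty}(x)t^{2}$ for small $|t|$; so for elements $u=\tau z_{0}+y$ whose $E^{-}$-part spreads out (large $L^{2}$ mass at small amplitude), $\int_{\mathbb{R}^{N}}F_{\infty}(x,u)\,dx$ is genuinely of order $-\|u\|^{2}$. Consequently the term $-2\int F_{\infty}$ you wish to absorb can be positive and of order $+\|u\|^{2}$, with a constant that has no reason to be smaller than your $c_{0}$. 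The proposed repair via translations and a Lions-type alternative does not obviously close this either: translating by $z_{n}\in\mathbb{Z}^{N}$ destroys the cone structure relative to the fixed $z_{0}$ appearing in (\ref{A52}), and in the vanishing alternative Lions' lemma only kills $L^{p}$ norms for $p\in(2,2^{*})$, whereas the problematic quantity here is an $L^{2}$-type integral.

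The paper avoids this trap by never separating the quadratic bracket from $F_{\infty}$ over the whole space. Arguing by contradiction with $u_{n}$ in the cone, $\|u_{n}\|\to\infty$ and $I(u_{n})>0$, it normalizes $\overline{v}_{n}=u_{n}/\|u_{n}\|$, uses $F\ge 0$ to discard the integral of $F$ outside a \emph{fixed} ball $B_{R}$, writes $F=\tfrac{1}{2}V_{\infty}t^{2}+F_{\infty}$ only on $B_{R}$, and there exploits the compact embedding $H^{1}(B_{R})\hookrightarrow L^{2}(B_{R})$ plus dominated convergence (splitting into the cases $\overline{v}=0$ and $\overline{v}\neq 0$) to show $\int_{B_{R}}F_{\infty}(x,u_{n})/\|u_{n}\|^{2}\,dx\to 0$; only after passing to the limit in $n$ does it send $R\to\infty$ and contradict (\ref{A52}). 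To repair your argument you would need to restructure it along these lines: the small-amplitude regions must be controlled by the sign of $F$, not by the asymptotic $F_{\infty}=o(t^{2})$, which is available only where $|u|$ is large.
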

\begin{proof}
	
Let $E=E^{-}\oplus \mathbb{R} z_{0}\equiv E^{-}\oplus \mathbb{R}_{+} z_{0}$. If $u \in E$, then
	$$I(u)= \frac{t^{2}}{2}||z_{0}||^{2}- \frac{||y||^{2}}{2}-\int_{\mathbb{R}^{N}} F(x,y+tz_{0})\;dx.$$
	\begin{claim}\label{A18}
		There exists $R>0$ such that
		\begin{equation*}
		I(u) \leq 0\;\;\mbox{for}\;\;||u||=R.
		\end{equation*}
	\end{claim}
Indeed, suppose that there are $(y_{n}) \subset E^{-}$ and $(t_{n}) \subset [0,+\infty)$ such that $||y_{n}+t_{n} z||\rightarrow +\infty$ and $I(y_{n}+t_{n}z_{0})> 0$ for all $n \in \mathbb{N}$. So, for $u_{n}=y_{n}+t_{n} z$, 
	\begin{eqnarray}
	0<\frac{I(u_{n})}{||u_{n}||^{2}}&= & \frac{1}{2}\frac{t_{n}^{2}}{||u_{n}||^{2}}||z_{0}||^{2}- \frac{1}{2}\frac{||y_{n}||^{2}}{||u_{n}||^{2}}-\int_{\mathbb{R}^{N}} \frac{F(x,u_{n})}{||u_{n}||^{2}}\;dx, \nonumber 
	\end{eqnarray}
	that is,
	\begin{equation}\label{A15}
	\frac{I(u_{n})}{||u_{n}||^{2}}=  \frac{1}{2}\tau_{n}^{2}||z_{0}||^{2}- \frac{1}{2}||v_{n}||^{2}-\int_{\mathbb{R}^{N}} \frac{F(x,u_{n})}{||u_{n}||^{2}}\;dx
	\end{equation}
	where
	$$\tau_{n}=\frac{t_{n}}{||u_{n}||} \;\;\mbox{and}\;\;v_{n}=\frac{y_{n}}{||u_{n}||}.$$
	Note that 
$$
	\tau_{n}^{2}||z_{0}||^{2}+||v_{n}||^{2}= \frac{||t_{n}z_{0}||^{2}+||y_{n}||^{2}}{||u_{n}||^{2}} = \frac{||u_{n}||^{2}}{||u_{n}||^{2}}=1. \nonumber
$$
Hence, going to a subsequence if necessary, there are $\tau \geq 0$ and $v \in X^{-}$ such that
	\begin{equation*}
	\tau_{n}\rightarrow \tau\;\;\mbox{in}\;\;\mathbb{R}\;\;\mbox{and}\;\; v_{n}\rightharpoonup v\;\;\mbox{in}\;\;H^{1}(\mathbb{R}^{N}).
	\end{equation*}
	Setting
	$$
	\overline{v}=\tau z_{0}+v\;\;\mbox{and}\;\; \overline{v}_{n}= \tau_{n} z_{0}+v_{n},
	$$
	we obtain
	\begin{equation*}
	\overline{v}_{n}= \frac{u_{n}}{||u_{n}||} \quad \mbox{and}\;\;	\overline{v}_{n}\rightharpoonup \overline{v}\;\;\mbox{in}\;\;H^{1}(\mathbb{R}^{N}). 
	\end{equation*}	
	For each $R>0$, (\ref{A15}) combined with $(H3)$ gives 
	\begin{eqnarray}
	0 \leq  \frac{I(u_{n})}{||u_{n}||^{2}}&=&  \frac{1}{2}\tau_{n}^{2}||z_{0}||^{2}- \frac{1}{2}||v_{n}||^{2}-\int_{\mathbb{R}^{N}} \frac{F(x,u_{n})}{||u_{n}||^{2}}\;dx \nonumber \\
	&\leq&   \frac{1}{2}\tau_{n}^{2}||z_{0}||^{2}- \frac{1}{2}||v_{n}||^{2}-\frac{1}{2}\int_{B_{R}} \frac{V_{\infty}(x)u_{n}^{2}}{||u_{n}||^{2}}\;dx-\int_{B_{R}}\frac{F_{\infty}(x,u_{n})}{||u_{n}||^{2}}\;dx \nonumber \\
	&=& \frac{1}{2}\tau_{n}^{2}||z_{0}||^{2}- \frac{1}{2}||v_{n}||^{2}-\frac{1}{2}\int_{B_{R}} V_{\infty}(x) \overline{v}_{n}^{2}\;dx-\int_{B_{R}}\frac{F_{\infty}(x,u_{n})}{||u_{n}||^{2}}\;dx. \nonumber
	\end{eqnarray}
In what follows, we will show that 
$$
\lim_{n \to +\infty}\int_{B_{R}}\frac{F_{\infty}(x,u_{n})}{||u_{n}||^{2}}\;dx=0.
$$
Since
$$
 \int_{B_{R}}\frac{F_{\infty}(x,u_{n})}{||u_{n}||^{2}}\;dx= \int_{B_{r}}\frac{F_{\infty}(x,u_{n})}{u_{n}^{2}}\frac{u_{n}^{2}}{||u_{n}||^{2}}\;dx=\int_{B_{R}}\frac{F_{\infty}(x,u_{n})}{u_{n}^{2}}\overline{v}_{n}^{2}\;dx
 $$
it is enough to show that 
$$
\lim_{n \to +\infty}\int_{B_{R}}\frac{F_{\infty}(x,u_{n})}{u_{n}^{2}}\overline{v}_{n}^{2}\;dx=0.
$$
In the proof this limit, the condition $(H3)$ applies an essential rule, because it ensures that 
\begin{equation*}
	\lim_{|t|\rightarrow +\infty}\frac{F_{\infty}(x,t)}{t^{2}}=0
\end{equation*}
and  
$$
|F_{\infty}(x,t)| \leq \frac{c_{0}}{2}|t|^{2}, \quad \forall t \in \mathbb{R} \quad \mbox{and} \quad \forall x \in \mathbb{R}^N,  
$$
for some $c_0>0$

Next, we will analyze the cases $\overline{v} = 0$ and $\overline{v}\not=0$. 
	\begin{itemize}
		\item If $\overline{v}=0$, then
		\begin{eqnarray}
		\left|\int_{B_{R}}\frac{F_{\infty}(x,u_{n})}{u_{n}^{2}}\overline{v}_{n}^{2}\;dx\right| &\leq & \frac{c_{0}}{2} \int_{B_{R}} |\overline{v}_{n}|^{2}\;dx. 
		\end{eqnarray}
	Using the fact that $\overline{v}_{n}\rightarrow 0$ in $L^{2}(B_{R})$, we derive 
		\begin{equation*}
		\lim_{n\rightarrow +\infty} \int_{B_{R}}\frac{F_{\infty}(x,u_{n})}{u_{n}^{2}}\overline{v}_{n}^{2}\;dx=0.
		\end{equation*}
		\item If $\overline{v}\neq 0$, as $\overline{v}_{n}(x)\rightarrow \overline{v}(x)$ a.e in $B_{R}$ and $||u_{n}||\rightarrow +\infty$, we must have $u_{n}(x)=\overline{v}_{n}(x)||u_{n}||\rightarrow +\infty$ a.e. in $B_R(0)$, and so, 
		\begin{equation*}
		\lim_{n\rightarrow +\infty}\frac{F_{\infty}(x,u_{n})}{u_{n}^{2}}\overline{v}_{n}^{2}= 0,\;\mbox{a.e in}\;\;B_{R}.
		\end{equation*}
	\end{itemize}
As
	$$
	\left|\frac{F_{\infty}(x,u_{n})}{u_{n}^{2}}\overline{v}_{n}\right| \leq \frac{c_{0}}{2}|\overline{v}_{n}|^{2},
	$$
the Lebesgue's dominated convergence theorem yields
	$$
	\lim_{n \to +\infty}\int_{B_{R}}\frac{F_{\infty}(x,u_{n})}{u_{n}^{2}}\overline{v}_{n}^{2}\;dx=0,
	$$
	or equivalently,
	$$
	\lim_{n \to +\infty}\int_{B_{R}}\frac{F_{\infty}(x,u_{n})}{||u_{n}||^{2}}\;dx=0.
	$$
	Therefore,
	\begin{eqnarray}
	0&\leq&\limsup_{n \to +\infty}\left(\frac{1}{2}\tau_{n}^{2}||z_{0}||^{2}- \frac{1}{2}||v_{n}||^{2}-\frac{1}{2}\int_{B_{R}} V_{\infty}(x) \overline{v}_{n}^{2}\;dx-\int_{B_{R}}\frac{F_{\infty}(x,u_{n})}{||u_{n}||^{2}}\;dx\right) \nonumber \\
	&\leq & \frac{1}{2}\tau^{2}||z_{0}||^{2}- \frac{1}{2}\liminf_{n \to +\infty}||v_{n}||^{2}-\frac{1}{2}\int_{B_{R}} V_{\infty}(x) \overline{v}^{2}\;dx \nonumber \\
	&\leq & \frac{1}{2}\tau^{2}||z_{0}||^{2}- \frac{1}{2}||v||^{2}-\frac{1}{2}\int_{B_{R}} V_{\infty}(x) (\tau z_{0}+v)^{2}\;dx. \nonumber 
	\end{eqnarray}
	Taking  the limit of $R \to +\infty$ and using the fact that $V_{\infty}(\tau z_{0}+y)^{2} \in L^{1}(\mathbb{R}^{N})$, we find the inequality below  
	$$
	0\leq \frac{1}{2}\tau^{2}||z_{0}||^{2}- \frac{1}{2}||v||^{2}-\frac{1}{2}\int_{\mathbb{R}^{N}} V_{\infty}(x) (\tau z_{0}+v)^{2}\;dx,
	$$
	which contradicts (\ref{A52}). This finishes the proof of Claim \ref{A18}, and the lemma is proved.
\end{proof}
Before continuing our study, we would like to point out that the condition (\ref{A52}) is not empty, because by \cite[Remark 3.6]{G.B Li} there exists $z_{0} \in X^{+}$ with $||z_{0}||=1$, such that
	\begin{equation*}
	t^{2} ||z_{0}||^{2}-||y||^{2}- \mu||y+t z_{0}||_{2}^{2}<0, \quad \forall t \in \mathbb{R} \quad \mbox{and} \quad \forall y \in E^{-}. 
	\end{equation*}
Hence, by $(H3)$, 
	$$
	\tau^{2} ||u_{0}^{+}||^{2}-||y||^{2}- \int_{\mathbb{R}^{N}} V_{\infty}(x)(\tau u_{0}^{+}+y)^{2}dx<0, \quad \forall t \in \mathbb{R} \quad \mbox{and} \quad \forall y \in E^{-}.
	$$

\begin{lemma}\label{A26}
All sequences $(C)_{c}$ for the functional $I$ are bounded.
\end{lemma}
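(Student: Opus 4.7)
The plan is to argue by contradiction. Assume some $(C)_c$-sequence $(u_n)$ has $\|u_n\|\to\infty$, and normalize $v_n:=u_n/\|u_n\|$, so $\|v_n\|=1$ and, along a subsequence, $v_n\rightharpoonup v$ in $H^1(\mathbb R^N)$, $v_n\to v$ in $L^2_{\mathrm{loc}}$ and a.e. From the Cerami condition pick $\xi_n=Q'(u_n)-\rho_n\in\partial I(u_n)$ with $\rho_n\in\partial\Psi(u_n)$ and $(1+\|u_n\|)\|\xi_n\|_*\to 0$; let $\tilde\rho_n\in L^2(\mathbb R^N)$ represent $\rho_n$, so $\tilde\rho_n(x)\in[\underline f(x,u_n(x)),\overline f(x,u_n(x))]$ a.e. By \eqref{P1*} the ratio $q_n(x):=\tilde\rho_n(x)/u_n(x)$ (set to $0$ on $\{u_n=0\}$) satisfies $\|q_n\|_\infty\le C$, and from (H4) (which gives $\rho t\ge 2F(x,t)\ge 0$) one also has $q_n\ge 0$. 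The strategy has two main steps: first, reduce via $\mathbb Z^N$-periodicity to the case $v\neq 0$; second, derive a contradiction from the asymptotic linear equation satisfied by $v$.

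\textbf{Step 1 (reduction to $v\neq 0$).} If $v=0$, apply Lions' concentration-compactness lemma to the bounded sequence $(v_n)$ in $H^1$: either $v_n\to 0$ in $L^p(\mathbb R^N)$ for all $p\in(2,2^*)$, or there exist $y_n\in\mathbb Z^N$ with $\int_{B_1(y_n)}v_n^2\,dx\ge\alpha_0>0$. In the non-vanishing alternative, the $\mathbb Z^N$-invariance of $V$ and $f$ makes $\tilde u_n:=u_n(\cdot-y_n)$ a Cerami sequence at level $c$ with the same norm, and $\tilde v_n:=v_n(\cdot-y_n)\rightharpoonup\tilde v\neq 0$ along a subsequence; after relabeling one may assume $v\neq 0$. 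In the vanishing alternative, combine (H4) with $2I(u_n)-\langle\xi_n,u_n\rangle\to 2c$ to obtain
\begin{equation*}
\int_{\mathbb R^N}\!\Bigl[\tfrac12\tilde\rho_n u_n-F(x,u_n)\Bigr]\,dx = c+o(1),
\end{equation*}
and invoke (H5) to conclude that $A_n:=\{q_n\ge\mu_0-\delta\}$ has $|A_n|\le c/\delta+o(1)$. Testing $\xi_n$ against $u_n$ and dividing by $\|u_n\|^2$ gives $\|v_n^+\|^2-\|v_n^-\|^2=\int q_n v_n^2\,dx+o(1)$; splitting the integral over $A_n$ (which is $o(1)$ via Hölder, using $|A_n|$ bounded and $v_n\to 0$ in $L^p$) and $A_n^c$ (where $q_n\le \mu_0-\delta$ and $v_n^2\ge 0$), together with the spectral estimate $\|v_n\|_2^2\le \|v_n\|^2/\mu_0=1/\mu_0$, combined with a parallel testing against $u_n^+-u_n^-$ and the (H4)-positivity $q_n\ge 0$, yields the strict inequality $1\le 1-\delta/\mu_0+o(1)$, a contradiction.

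\textbf{Step 2 (contradiction from $v\neq 0$).} On $\Omega:=\{v\neq 0\}$ one has $|u_n|\to\infty$, so the decomposition (H3) $f=V_\infty t+f_\infty$ transfers to the envelopes giving $\underline f(x,t)/t,\overline f(x,t)/t\to V_\infty(x)$ as $|t|\to\infty$; hence $q_n(x)\to V_\infty(x)$ a.e. on $\Omega$. For any $\phi\in C_c^\infty(\mathbb R^N)$, testing $\xi_n$ against $\phi$, dividing by $\|u_n\|$, and letting $n\to\infty$ (dominated convergence on $\mathrm{supp}\,\phi$: on $\Omega$ use $q_n\to V_\infty$ and $v_n\to v$ in $L^2_{\mathrm{loc}}$, on $\Omega^c$ use $v_n\to 0$ a.e. and the uniform bound $|q_n|\le C$) produces the limit identity
\begin{equation*}
\int_{\mathbb R^N}(\nabla v\cdot\nabla\phi+Vv\phi)\,dx=\int_{\mathbb R^N}V_\infty v\phi\,dx,\qquad \forall\,\phi\in C_c^\infty(\mathbb R^N),
\end{equation*}
i.e. $v$ weakly solves $(-\Delta+V-V_\infty)v=0$ on $\mathbb R^N$. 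Testing this equation against $v^+$ and $-v^-$, using the $L^2$-orthogonality of the spectral subspaces $E^\pm$ of $-\Delta+V$, the spectral inequalities \eqref{A83}, and the strict separation $V_\infty\ge\mu>\mu_1$ from (H3), one argues exactly as in \cite{G.B Li} to force $v\equiv 0$, contradicting $v\neq 0$.

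\textbf{Main obstacle.} The most delicate points are (i) transferring the asymptotic behaviour $f_\infty(x,t)/t\to 0$ from the function $f$ itself to the measurable-selection $\tilde\rho_n$ out of $[\underline f,\overline f]$, which is essential for identifying $q_n\to V_\infty$ on $\Omega$, and (ii) the final spectral step: because $V_\infty$ is only a bounded periodic function rather than a constant, showing that $(-\Delta+V-V_\infty)v=0$ admits only the trivial $H^1$-solution requires a careful comparison with the spectral gap of $-\Delta+V$ exploiting the strict inequality $\mu>\mu_1$, rather than a direct eigenvalue computation. The bookkeeping in Step 1, in particular coordinating the use of (H4), (H5) and the $L^p$-vanishing to bound $|A_n|$ and to dispose of the $A_n$-contribution, is the other technical hurdle.
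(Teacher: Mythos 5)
Your overall architecture is the same as the paper's: assume $\|u_n\|\to\infty$, normalize $v_n=u_n/\|u_n\|$, and split into the vanishing and non-vanishing alternatives. Your treatment of the vanishing case is essentially the paper's argument in contrapositive form (the paper shows $|\{q_n>\mu_0-\delta\}|\to\infty$ would follow from $\int q_nv_n(v_n^+-v_n^-)\to 1$ and then contradicts $\int(\tfrac12\rho_nu_n-F(x,u_n))\to c$ via (H4)--(H5); you bound the measure of that set first by (H5) and then contradict the limit $1$ via H\"older and $\|v_n\|_2^2\le 1/\mu_0$), and this part is sound. The derivation of the limit equation $(-\Delta+V-V_\infty)\tilde v=0$ with $\tilde v\neq 0$ in the non-vanishing case also matches the paper, modulo the technical point (which you flag) of transferring $f_\infty(x,t)/t\to 0$ to the envelopes $\underline f_\infty,\overline f_\infty$, which the paper handles in its Claim 5.4.

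The genuine gap is the last step of your Step 2. Testing $(-\Delta+V-V_\infty)v=0$ against $v^+$ and $-v^-$ gives
\begin{equation*}
\|v^+\|^2+\|v^-\|^2=\int_{\mathbb{R}^N}V_\infty\bigl((v^+)^2-(v^-)^2\bigr)\,dx,
\end{equation*}
and combining this with $\|v^+\|^2\ge\mu_1\|v^+\|_2^2$, $\|v^-\|^2\ge-\mu_{-1}\|v^-\|_2^2$ and $V_\infty\ge\mu>\mu_1$ produces inequalities pointing in compatible directions: since $V_\infty$ dominates $\mu_1$ from above, nothing forces $v^+=v^-=0$. Indeed $0$ may perfectly well lie in the (essential) spectrum of the periodic operator $-\Delta+V-V_\infty$; what rules out a nontrivial $H^1$-solution is not a spectral-gap comparison but the fact that periodic Schr\"odinger operators have purely absolutely continuous spectrum and hence no eigenvalues — this is exactly the step the paper takes, citing Kuchment (Theorem 4.59), and it is also what Li and Szulkin do. The hypothesis $V_\infty\ge\mu>\mu_1$ is used for the linking geometry (Lemma \ref{A22}), not here. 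So you need to replace your ``testing against $v^\pm$'' conclusion by the eigenvalue-free property of $-\Delta+(V-V_\infty)$; as written, Step 2 does not close the argument.
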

\begin{proof}
	
	Let $(u_{n}) \subset H^{1}(\mathbb{R}^{n})$ be a sequence $(C)_{c}$ for the functional $I$, that is,
	\begin{equation*}
	I(u_{n})\rightarrow c\;\;\mbox{and}\;\; (1+||u_{n}||)\lambda_{I}(u_{n})\rightarrow 0\;\; \mbox{as}\;\;n\rightarrow +\infty.
	\end{equation*}	
	Next, we set $w_{n} \in \partial I(u_{n})$ with $\lambda_{I}(u_{n})=||w_{n}||_{*}$ and $\rho_{n} \in \partial \Psi(u_{n})$ such that 
	\begin{equation*}
	w_{n}=Q'(u_{n})-\rho_{n}.
	\end{equation*}

	Suppose by contradiction that  for some subsquence, still denoted by $(u_n)$, 
	\begin{equation}
	||u_{n}||\rightarrow +\infty\;\;\mbox{as}\;\;n\rightarrow +\infty.
	\end{equation}
	Setting the sequence $v_{n}=\frac{u_{n}}{||u_{n}||}$, then $(v_{n})$ is either: 
	\begin{itemize}
		\item [(1)] (\it Vanishing): For each $r>0$ 
		$$
		\lim_{n \to +\infty}\sup_{y \in \mathbb{R}^{N}} \int_{B_{r}(y)}|v_{n}|^{2}\;dx=0,
		$$
		or
		\item [(2)] (\it Non-vanishing): There exist $r, \eta>0$ and a sequence $(z_{n}) \subset \mathbb{Z}^{N}$ such that
		$$
		\limsup_{n \to +\infty}\int_{B_{r}(z_{n})}|v_{n}|^{2}\;dx\geq \eta.
		$$
	\end{itemize}
	Suppose first that $(v_{n})$ is non-vanishing. 	Given $\varphi \in C_{0}^{\infty}(\mathbb{R}^{N})$ and $\varphi_{n}(x)=\varphi(x-z_{n})$, we obtain that 
	\begin{eqnarray}
	o_n(1)=\frac{1}{||u_{n}||}\left<w_{n}, \varphi_{n}\right>&=& \left<v_{n}^{+}-v_{n}^{-}, \varphi_{n}\right>-\frac{1}{||u_{n}||} \int_{\mathbb{R}^{N}}\rho_{n}\varphi_{n}\;dx \nonumber \\
	&=& \left<v_{n}^{+}-v_{n}^{-}, \varphi_{n}\right>-\frac{1}{||u_{n}||} \int_{\mathbb{R}^{N}}V_{\infty}(x)u_{n}\varphi_{n}\;dx-\frac{1}{||u_{n}||} \int_{\mathbb{R}^{N}} \rho_{n}^{\infty} \varphi_{n}\;dx \nonumber\\
	&=& \left<v_{n}^{+}-v_{n}^{-}, \varphi_{n}\right>- \int_{\mathbb{R}^{N}}V_{\infty}(x)v_{n}\varphi_{n}\;dx- \int_{\mathbb{R}^{N}} \frac{\rho_{n}^{\infty}}{u_{n}}v_{n} \varphi_{n}\;dx,\nonumber
	\end{eqnarray}
	where $\rho_{n}^{\infty} \in \partial_{t} F_{\infty}(x,u_{n})$, that is,
	\begin{equation}\label{A27}
	\left<v_{n}^{+}-v_{n}^{-}, \varphi_{n}\right>- \int_{\mathbb{R}^{N}}V_{\infty}(x)v_{n}\varphi_{n}\;dx- \int_{\mathbb{R}^{N}} \frac{\rho_{n}^{\infty}}{u_{n}}v_{n} \varphi_{n}\;dx\rightarrow 0.
	\end{equation}
	Let
	\begin{equation*}
	\tilde{v}_{n}(x)=v_{n}(x+z_{n})\;\;\mbox{and}\;\; \tilde{u}_{n}(x)=u_{n}(x+z_{n}).
	\end{equation*}
	Knowing $||\tilde{v}_{n}||=||v_{n}||=1$, then,  going to a subsequence if necessary, there exists $\tilde{v} \in H^{1}(\mathbb{R}^{N})$ such that
	$$
	\tilde{v}_{n}\rightharpoonup \tilde{v}\;\;\mbox{in}\;\;H^{1}(\mathbb{R}^{N})
	$$
	and so,  $\tilde{v}_{n}\rightarrow \tilde{v}$ in $L_{loc}^{2}(\mathbb{R}^{N}).$ Since
	\begin{equation*}
	\int_{B_{r}(0)}|\tilde{v}|^{2}\;dx=\limsup_{n \to +\infty}\int_{B_{r}(0)}|\tilde{v}_{n}|^{2}\;dx=\limsup_{n \to +\infty}\int_{B_{r}(z_{n})}|v_{n}|^{2}\;dx\geq \eta,
	\end{equation*}
	it follows that $\tilde{v}\neq 0$.
	\begin{claim} \label{A46}
		For $\rho_{n}^{\infty} \in \partial_{t}F_{\infty}(x, u_{n})$,
		$$\lim_{n \to +\infty}\int_{\mathbb{R}^{N}} \frac{\rho_{n}^{\infty}}{u_{n}}v_{n}\varphi_{n}\;dx=0.$$
	\end{claim}
The claim follows from the limits below 
	\begin{equation*}
	\lim_{n \to +\infty}\int_{\mathbb{R}^{N}} \frac{|\underline{f}_{\infty}(x,u_{n})|}{|u_{n}|}|v_{n}\varphi_{n}|\;dx=0\;\;\mbox{and}\;\;\lim_{n \to +\infty}\int_{\mathbb{R}^{N}} \frac{|\overline{f}_{\infty}(x,u_{n})|}{|u_{n}|}|v_{n}\varphi_{n}|\;dx=0,
	\end{equation*}
	because
	\begin{equation*}
	|\rho_{n}^{\infty}| \leq |\overline{f}_{\infty}(x,u(x))|+|\underline{f}_{\infty}(x, u(x)) |,
	\end{equation*}
leads to
	\begin{equation*}
	\frac{|\rho_{n}^{\infty}|}{|u_{n}|} |v_{n}\varphi_{n}|\leq \frac{|\overline{f}_{\infty}(x,u(x))|}{|u_{n}|}|v_{n}\varphi_{n}|+\frac{|\underline{f}_{\infty}(x, u(x))|}{|u_{n}|} |v_{n}\varphi_{n}|.
	\end{equation*}
By $(H2)$, 
\begin{equation}\label{A57}
	\lim_{|t|\rightarrow +\infty} \frac{\underline{f}_{\infty}(x,t)}{t}=0\;\; \mbox{and}\;\; \lim_{|t|\rightarrow +\infty} \frac{\overline{f}_{\infty}(x,t)}{t}=0\;\;\mbox{uniformly in}\;\;\mathbb{R}^{N}.
\end{equation}
and by  $(H3)$,
	\begin{equation} \label{XX0}
	|\overline{f}_{\infty}(x,t)|\leq c_{0}|t|\;\;\mbox{and}\;\; |\underline{f}_{\infty}(x,t)|\leq c_{0}|t|, \;\forall t \in \mathbb{R} \quad \mbox{and} \quad \forall\; x \in \mathbb{R}^{N},
	\end{equation}
for some $c_{0}>0$. 

	Notice 
	\begin{equation*}
	\int_{\mathbb{R}^{N}} \frac{|\overline{f}_{\infty}(x,u(x))|}{|u_{n}(x)|}|v_{n}(x)\varphi_{n}(x)|dx= \int_{\mathbb{R}^{N}} \frac{|\overline{f}_{\infty}(x+z_{n}, \tilde{u}_{n}(x))|}{|\tilde{u}_{n}(x)|}|\tilde{v}_{n}(x)\varphi(x)|dx
	\end{equation*}	
	and
	\begin{equation*}
	\frac{|\overline{f}_{\infty}(x+z_{n}, \tilde{u}_{n}(x))|}{|\tilde{u}_{n}(x)|}|\tilde{v}_{n}(x)\varphi(x)|\leq c_{0}  |\tilde{v}_{n}(x)|\;|\varphi(x)|. 
	\end{equation*}
	Furthermore, fixing  $\omega_{n}(x)=|\tilde{v}_{n}(x)|\;|\varphi(x)|$ and $\omega(x)=|\tilde{v}(x)|\;|\varphi(x)|$, we have
	\begin{equation*}
	\sup_{n}||\omega_{n}||_{L^{1}(\mathbb{R}^{N})}<\infty,\;\int_{\mathbb{R}^{N}} \omega_{n}(x)dx\rightarrow \int_{\mathbb{R}^{N}} \omega(x) dx
	\end{equation*}
	and $\omega_{n}(x)\rightarrow \omega(x)\;\; \mbox{a.e in}\;\; \mathbb{R}^{N}$. So,
	\begin{equation*}
	\omega_{n}\rightarrow \omega\;\;\mbox{in}\;\;L^{1}(\mathbb{R}^{N}).
	\end{equation*}
	Going to a subsequence if necessary, there exists $h \in L^{1}(\mathbb{R}^{N})$ such that
	\begin{equation*}
	|\tilde{v}_{n}(x)|\;|\varphi(x)| \leq h(x)\;\;\mbox{a.e in }\;\;\mathbb{R}^{N},\;\forall\; n \in \mathbb{N}
	\end{equation*}
and 
	\begin{equation*}
	\left| \frac{\overline{f}_{\infty}(x+z_{n}, \tilde{u}_{n}(x))}{\tilde{u}_{n}(x)}\right||\tilde{v}_{n}(x)\varphi(x)|\leq c_{0} h(x),\;\;\mbox{a.e in}\;\;\mathbb{R}^{N},\;\forall\; n \in \mathbb{N}. 
	\end{equation*}
	Now, let us consider the sets
	$$A_{0}=\{x\in \mathbb{R}^{N}\;:\;\tilde{v}(x)=0\}\;\;\mbox{and}\;\;A=\{x\in \mathbb{R}^{N}\;:\;\tilde{v}(x)\neq 0\}.$$
	Thereby
	\begin{equation*}
	\left|\frac{\overline{f}_{\infty}(x+z_{n}, \tilde{u}_{n}(x))}{\tilde{u}_{n}(x)}\right|\;|\tilde{v}_{n}(x)\varphi(x)|dx\leq c_{0} |\tilde{v}_{n}(x)\varphi(x)|dx\rightarrow 0\;\;\mbox{a.e in}\;\;A_{0}.
	\end{equation*}
	By Lebesgue's dominated convergence theorem,
	\begin{equation*}
	\lim_{n \to +\infty}\int_{A_{0}} \left|\frac{\overline{f}_{\infty}(x+z_{n}, \tilde{u}_{n}(x))}{\tilde{u}_{n}(x)}\right|\;|\tilde{v}_{n}(x)\varphi(x)|dx=0.
	\end{equation*}
	Using the fact that $\tilde{u}_{n}(x)= \tilde{v}_{n}(x)||u_{n}||\rightarrow +\infty$ in $A$, by (\ref{A57}), 
	$$
	\lim_{n \to +\infty}\left[\left|\frac{\overline{f}_{\infty}(x+z_{n}, \tilde{u}_{n}(x))}{\tilde{u}_{n}(x)}\right|\;|\tilde{v}_{n}(x)\varphi(x)|\right]=0.\;\;\mbox{a.e in}\;\;A.
	$$
	Again, by Lebesgue's dominated convergence theorem, 
	\begin{equation*}
	\lim_{n \to +\infty}\int_{A} \left|\frac{\overline{f}_{\infty}(x+z_{n}, \tilde{u}_{n}(x))}{\tilde{u}_{n}(x)}\right|\;|\tilde{v}_{n}(x)\varphi(x)|dx=0.
	\end{equation*}
	Thus,
	\begin{equation*}
	\int_{\mathbb{R}^{N}}\left|\frac{\overline{f}_{\infty}(x+z_{n}, \tilde{u}_{n}(x))}{\tilde{u}_{n}(x)}\right|\;|\tilde{v}_{n}(x)\varphi(x)|dx=0.
	\end{equation*}
	Analogously
	\begin{equation*}
	\lim_{n \to +\infty}\int_{\mathbb{R}^{N}} \left|\frac{\underline{f}_{\infty}(x+z_{n}, \tilde{u}_{n}(x))}{\tilde{u}_{n}(x)}\right|\;|\tilde{v}_{n}(x)\varphi(x)|dx=0,
	\end{equation*}
	proving the claim
	
	By Claim \ref{A46},
	$$
	\lim_{n \to +\infty} \left(\int_{\mathbb{R}^{N}}V_{\infty}(x)v_{n}\varphi_{n}\;dx- \int_{\mathbb{R}^{N}} \frac{\rho_{n}^{\infty}}{u_{n}}v_{n} \varphi_{n}\;dx\right)=\int_{\mathbb{R}^{N}}V_{\infty}(x)\tilde{v}\varphi\;dx.
	$$
	Then, from (\ref{A27}), 
	\begin{equation*}
	\left<\tilde{v}^{+}-\tilde{v}^{-}, \varphi\right>- \int_{\mathbb{R}^{N}}V_{\infty}(x)\tilde{v}\varphi\;dx=0,\;\forall\;\varphi \in C_{0}^{\infty}(\mathbb{R}^{N}),
	\end{equation*}
	that is, $\tilde{v}\in H^{1}(\mathbb{R}^{N})\backslash\{0\}$ verifies
	$$-\Delta \tilde{v}+(V-V_{\infty})\tilde{v}=0,\;\;\mbox{in}\;\;\mathbb{R}^{N}.$$
	However, since  $(V-V_{\infty})$ is periodic, the spectrum of $-\Delta+V-V_{\infty}$ is absolutely continuous, then it has no eigenvalues [see \cite{Kuchment}, Theorem 4.59]. This shows that $(v_{n})$ cannot be non-vanishing.

	Suppose that $(v_{n})$ is vanishing. As in (\ref{A27}), 
	\begin{equation}\label{A28}
	||v_{n}^{+}||^{2}- \int_{\mathbb{R}^{N}} \frac{\rho_{n}}{u_{n}}v_{n} v^{+}_{n}\;dx=\left<v_{n}^{+}-v_{n}^{-}, v^{+}_{n}\right>- \int_{\mathbb{R}^{N}} \frac{\rho_{n}}{u_{n}}v_{n} v^{+}_{n}\;dx\rightarrow 0
	\end{equation}
	and
	\begin{equation}\label{A29}
	-||v_{n}^{-}||^{2}- \int_{\mathbb{R}^{N}} \frac{\rho_{n}}{u_{n}}v_{n} v^{-}_{n}\;dx=\left<v_{n}^{+}-v_{n}^{-}, v^{-}_{n}\right>- \int_{\mathbb{R}^{N}} \frac{\rho_{n}}{u_{n}}v_{n} v^{-}_{n}\;dx\rightarrow 0.
	\end{equation}
Since  $||v_{n}||=1$, (\ref{A28}) and (\ref{A29}) lead to 
	\begin{eqnarray}
	1-\int_{\mathbb{R}^{N}} \frac{\rho_{n}}{u_{n}}v_{n}( v^{+}_{n}-v^{-}_{n})\;dx\rightarrow 0,\nonumber
	\end{eqnarray}
	that is,
	\begin{equation} \label{ZZ1*}
	\lim_{n \to +\infty}\left(\int_{\mathbb{R}^{N}} \frac{\rho_{n}}{u_{n}}v_{n}( v^{+}_{n}-v^{-}_{n})\;dx\right)=1.
	\end{equation}
	By the definition of $\mu_{0}, \mu_{1}$ and $\mu_{-1}$, if $u \in H^{1}(\mathbb{R}^{N})$,
	\begin{equation*}\label{A51}
	||u^{+}||^{2}\geq \mu_{1} ||u^{+}||_{2}^{2},\;\forall\; u^{+} \in E^{+}\;\;\mbox{and}\;\;  ||u^{-}||^{2}\geq -\mu_{-1} ||u^{-}||_{2}^{2},\;\forall\; u^{-} \in E^{-}.
	\end{equation*}
	Hence,	
	\begin{equation}\label{A33}
	||u||^{2}\geq \mu_{0} ||u||_{2}^{2},\;\forall\; u \in H^{1}(\mathbb{R}^{N}).
	\end{equation}
	In what follows, let us consider the set
	$$
	\Omega_{n}=\left\{x\in\mathbb{R}^{N}\;:\; \frac{\rho_{n}(x)}{u_{n}(x)}\leq \mu_{0}-\delta \right\},
	$$
	where $\delta>0$ was given in $(H5)$. 
	
	By H\"older inequality, (\ref{A33}) and the orthogonality of $v_{n}^{+}$ and $v_{n}^{-}$ in $L^{2}(\mathbb{R}^{N})$, it follows that
	\begin{eqnarray}
	\int_{\Omega_{n}} \frac{\rho_{n}}{u_{n}}v_{n}(v_{n}^{+}-v_{n}^{-})\;dx& \leq & (\mu_{0}-\delta) \int_{\mathbb{R}^{N}} |v_{n}|\;|v_{n}^{+}-v_{n}^{-}|\;dx \nonumber \\
	&= &(\mu_{0}-\delta) ||v_{n}||_{2}\;||v_{n}^{+}-v_{n}^{-}||_{2} \nonumber \\
	&=&
	(\mu_{0}-\delta) ||v_{n}||_{2}^{2} \leq \frac{(\mu_{0}-\delta)}{\mu_{0}}<1. \nonumber
	\end{eqnarray}
This combined with (\ref{ZZ1*}) provides
	\begin{equation}\label{A31}
	\liminf_{n \to +\infty} \int_{\mathbb{R}^{N}\backslash \Omega_{n}} \frac{\rho_{n}}{u_{n}}v_{n}(v_{n}^{+}-v_{n}^{-})\;dx>0.
	\end{equation}
	\begin{claim}\label{A32}
		$$\lim_{n \to +\infty} |\mathbb{R}^{N}\backslash \Omega_{n}|=+\infty.$$
	\end{claim}
	Suppose that
	$$\limsup_{n \to +\infty} |\mathbb{R}^{N}\backslash \Omega_{n}|<\infty.$$
	Fixed $p\in (2,2^{*})$, the limit (\ref{A31}) combines with H\"older inequality to give  
	\begin{eqnarray}
	0& < &  \liminf_{n}\int_{\mathbb{R}^{N}\backslash \Omega_{n}} \frac{\rho_{n}}{u_{n}}v_{n}(v_{n}^{+}-v_{n}^{-})\;dx \nonumber \\
	&\leq & c_{0} \liminf_{n \to +\infty} \int_{\mathbb{R}^{N}\backslash \Omega_{n}} |v_{n}||v_{n}^{+}-v_{n}^{-}|\;dx \nonumber \\
	&\leq &  c_{0} \liminf_{n \to +\infty} \left[(|\mathbb{R}^{N}\backslash \Omega_{n}|)^{\frac{p-2}{p}}\left(\int_{\mathbb{R}^{N}\backslash \Omega_{n}}|v_{n}|^{\frac{p}{2}}|v_{n}^{+}-v_{n}^{-}|^{\frac{p}{2}}\;dx\right)^{\frac{2}{p}}\right] \nonumber \\
	&\leq &  c_{0} \liminf_{n \to +\infty} \left[(|\mathbb{R}^{N}\backslash \Omega_{n}|)^{\frac{p-2}{p}}\left(\int_{\mathbb{R}^{N}\backslash \Omega_{n}}|v_{n}|^{p}\;dx\right)^{\frac{1}{p}} \left(\int_{\mathbb{R}^{N}\backslash \Omega_{n}}|v_{n}^{+}-v_{n}^{-}|^{p}\;dx\right)^{\frac{1}{p}}\right]\nonumber \\
	&\leq &\tilde{c}_{0} \liminf_{n \to +\infty} \left[(|\mathbb{R}^{N}\backslash \Omega_{n}|)^{\frac{p-2}{p}}\left(\int_{\mathbb{R}^{N}\backslash \Omega_{n}}|v_{n}|^{p}\;dx\right)^{\frac{1}{p}} \right]. \nonumber
	\end{eqnarray}
	As we are supposing that $(v_{n})$ is vanishing, $v_{n}\rightarrow 0$ in $L^{s}(\mathbb{R}^{N})$ for $s \in(2,2^{*})$, and so,   
	$$
	0<\tilde{c}_{0} \liminf_{n \to +\infty} \left[(|\mathbb{R}^{N}\backslash \Omega_{n}|)^{\frac{p-2}{p}}\left(\int_{\mathbb{R}^{N}\backslash \Omega_{n}}|v_{n}|^{p}\;dx\right)^{\frac{2}{p}}\right]\rightarrow 0,
	$$
	which is absurd, and the Claim \ref{A32} is proved. Accordingly to $(H4)$, $(H5)$ and Claim \ref{A32}, 
	\begin{eqnarray}
	\int_{\mathbb{R}^{N}}\left(\frac{1}{2}\rho_{n}u_{n}-F(x,u_{n})\right)dx &\geq & \int_{\mathbb{R}^{N}\backslash \Omega_{n}}\left(\frac{1}{2}\rho_{n}u_{n}-F(x,u_{n})\right)dx \nonumber \\
	&\geq& \int_{\mathbb{R}^{N}\backslash \Omega_{n}} \delta\;dx\rightarrow +\infty, \nonumber
	\end{eqnarray}
	that is,
	$$\int_{\mathbb{R}^{N}}\left(\frac{1}{2}\rho_{n}u_{n}-F(x,u_{n})\right)dx\rightarrow +\infty.$$
	On the other hand, since $(u_{n})$ is a sequence $(C)_{c}$ and $\left<w_{n},u_{n}\right>\rightarrow 0$, we find 
	$$
	\int_{\mathbb{R}^{N}}\left(\frac{1}{2}\rho_{n}u_{n}-F(x,u_{n})\right)dx=I(u_{n})-\frac{1}{2}\left<w_{n},u_{n}\right>\rightarrow c,
	$$
which is a contradiction. This completes the proof. 
\end{proof}
Now, we are ready to conclude the proof of Theorem \ref{Teorema1}. \\

\noindent {\bf Proof of Theorem \ref{Teorema1}:} \,\, 

By Fatou's lemma the functional $I$ is $\tau$-upper semicontinuous, see \cite[Lemma 6.15]{MW}. Therefore, by Lemmas \ref{A25} and \ref{A22}, the functional $I$ satisfies the hypotheses of Theorem \ref{A85}, and so, by Lemma \ref{A26} there exists a bounded  $(C)_{c}$ sequence for the functional $I$, denoted by $(u_{n})\subset H^{1}(\mathbb{R}^{N})$,  i.e, 
\begin{equation*}
I(u_{n})\rightarrow c,\;\;\; (1+||u_{n}||)\lambda_{I}(u_{n})\rightarrow 0\;\;\mbox{and}\;\; ||u_{n}||\leq K,\;\forall\;n \in \mathbb{N},
\end{equation*}
for some $K>0$. By \cite [Lemma 1.21]{MW} and (\ref{P1}), there exists $\delta_1>0$ such that
$$\liminf_{n \to +\infty} \sup_{y \in \mathbb{R}^{N}} \int_{B(y,1)} |u_{n}|^{2}dx\geq \delta_1.$$
In addition, there is  $(z_{n}) \subset \mathbb{Z}^{N}$ such that
\begin{equation*}
\int_{B(z_{n},1+\sqrt{N})} |u_{n}|^{2} dx \geq \frac{\delta_1}{4}, \;n \geq n_{0}.
\end{equation*}
Setting $v_{n}(x)=u_{n}(x+z_{n})$, we compute 
\begin{equation}\label{N1}
\int_{B(0,1+\sqrt{N})} |v_{n}(x)|^{2}dx= \int_{B(z_{n},1+\sqrt{N})} |u_{n}(x)|^{2} dx\geq \frac{\delta_1}{4},\;n \geq n_{0}.
\end{equation}
Moreover, a simple computation also shows  $(v_{n}) \subset H^{1}(\mathbb{R}^{N})$ is a $(PS)_{c}$ sequence for $I$ (see \cite{AG} for details) and $||v_{n}||=||u_{n}||$. Since $(u_{n})$ is bounded, going to a subsequence if necessary, $v_{n} \rightharpoonup v$ in $H^{1}(\mathbb{R}^{N})$ and by (\ref{N1}) $v \neq 0$. Now, by using the Proposition \ref{7}, we can argue as in \cite{AG} to conclude that
$$
-\Delta v(x) + V(x) v(x) \in \partial_u F(x,u)\;\;\mbox{a.e in}\;\; \mathbb{R}^{N},
$$
showing the desired result. 

\section{The non periodic case}

In this section, we will prove the Theorem \ref{Teorema2}. 
Similarly to Section 3, the $(H2)$ and $(H3)$ ensure that the energy functional associated with problem $({P})$ defined by  
$$
I(u)= \frac{1}{2}\int_{\mathbb{R}^{N}}(|\nabla u|^{2}+V(x)u^{2})dx-\int_{\mathbb{R}^{N}}F(x,u)dx, \;u \in H^{1}(\mathbb{R}^{N}),
$$
is well defined. Furthermore, since the lemmas showed in Section 5 do not depend on the periodicity of function $f$, but only of its growth, all of them are also true in this section,  and we have the same link geometry. Consequently, there is a bounded sequence $(u_{n})\subset H^{1}(\mathbb {R}^{N})$ such that
\begin{equation*}
(1+||u_{n}||)\lambda_{I}(u_{n})\rightarrow 0\;\;\mbox{and}\;\;I(u_{n})\rightarrow c,
\end{equation*} 
where
$$
c=\inf_{\gamma \in \Gamma} \sup_{u \in \mathcal{M}}I(\gamma(1,u)).
$$

Next, we are going to recall some facts involving the periodic problem 
$$
\left\{\begin{aligned}
-\Delta u + V(x)u =& h(x,u),\;\;\mbox{ in}\;\;\mathbb{R}^{N},\;N\geq 3 \\
u \in H^{1}(\mathbb{R}^{N}),
\end{aligned}
\right. \leqno{(A)}
$$
where $ h:\mathbb{R}^{N}\times\mathbb{R}\rightarrow \mathbb{R} $ is the continuous function that satisfies the assumption $(H6)$. 

First of all, we would like point out that the energy functional associated with problem $(A)$, given by
$$
{J}(u)= \frac{1}{2}\int_{\mathbb{R}^{N}}(|\nabla u|^{2}+V(x)u^{2})dx-\int_{\mathbb{R}^{N}}H(x,u)dx, \;u \in H^{1}(\mathbb{R}^{N}),
$$
is well defined and ${J} \in C^{1}(H^{1}(\mathbb{R}^N),\mathbb{R})$, where $H(x,t)= \int_{0}^{t}h(x,s)ds$.

It is proved in \cite{G.B Li} that the functional $J$ has a nontrivial critical $ u_{0}\in H^{1}(\mathbb{R}^{N})\backslash\{0\}$ that satisfies: 
\begin{equation*}
\tau^{2} ||u_{0}^{+}||^{2}-||y||^{2}- \int_{\mathbb{R}^{N}}V_{\infty}(x)(\tau u_{0}^{+}+y)^{2}dx<0, \quad \forall \tau\in \mathbb{R} \quad \mbox{and} \quad y \in X^{-},
\end{equation*}
\begin{equation} \label{T21}
{J}(u_{0})= \min\{{J}(u)\;:\;u \neq 0\;\;\mbox{and}\;\; {J}'(u)=0\},
\end{equation}
and
$$
\sup_{u \in \mathcal{M}}{J}(u)={J}(u_{0})>0.
$$

In particular, if we choose $ z_{0}=u_{0}^{+}$ in the definition of $\mathcal{M}$, see Section 4, the Lemma \ref{A22} still holds with $z_0=u_0^+$. 

\vspace{0.5 cm}

Using the above information, we are ready to conclude the proof of Theorem \ref{Teorema2}.\\
 
\noindent {\bf Proof of Theorem \ref{Teorema2}:} \\

From above commentaries, there is a bounded sequence $(u_{n})\subset H^{1}(\mathbb{R}^{N})$ satisfying
\begin{equation*}
(1+||u_{n}||)\lambda_{J}(u_{n})\rightarrow 0\;\;\mbox{and}\;\;J(u_{n})\rightarrow c
\end{equation*} 
where
$$
c=\inf_{\gamma \in \Gamma} \sup_{u \in \mathcal{M}}J(\gamma(1,u)).
$$

Therefore, there are $w_{n} \in \partial I(u_{n})$ with $\lambda_{I}(u_{n})=||w_{n}||_{*}$ and $\rho_{n} \in \partial \Psi(u_{n})$ such that
\begin{equation*}
	\left<w_{n}, \varphi \right>= \left<Q'(u_{n}), \varphi \right>- \left<\rho_{n}, \varphi \right>,\;\forall\;\varphi \in  X,\;\forall\; n \in \mathbb{N}.
\end{equation*}
 
Since $H^{1}(\mathbb{R}^{N})$ is reflexive, going to a subsequence if necessary, there is $u \in H^{1}(\mathbb{R}^{N})$ such that
\begin{equation*}
u_{n}\rightharpoonup u\;\;\mbox{in}\;\;H^{1}(\mathbb{R}^{N}).
\end{equation*}
If $u\neq 0$, then the Theorem \ref{Teorema2} is proved. If $u=0$, we have
\begin{equation*}
u_{n}\rightharpoonup 0\;\;\mbox{in}\;\;H^{1}(\mathbb{R}^{N}).
\end{equation*}
From $(H7)$, $G(x,t)\geq H(x,t)$ for all $t \in \mathbb{R}$, and so, 
\begin{equation*}
0<c:=\inf_{\gamma \in \Gamma} \sup_{u \in \mathcal{M}}I(\gamma(1,u))\leq \sup_{u \in \mathcal{M}}I(u) \leq \sup_{u \in \mathcal{M}}{J}(u)={J}(u_{0}) 
\end{equation*}
that is,
\begin{equation}\label{O25}
c \leq {J}(u_{0}).
\end{equation}
Next, we are going to prove that $c=J(u_0)$. 
\begin{claim}\label{A67}
$$
\rho_{n}-\tilde{\Psi}'(u_{n})\rightarrow 0 \;\;\mbox{and}\;\;\ 
	\Psi(u_{n})-\tilde{\Psi}(u_{n})\rightarrow 0,
$$ 
where
\begin{equation*}
\Psi(u)=\int_{\mathbb{R}^{N}} F(x,u)dx\;\;\mbox{and}\;\;\tilde{\Psi}(u)=\int_{\mathbb{R}^{N}} H(x,u)dx.
\end{equation*}
\end{claim}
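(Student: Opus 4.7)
The strategy is to handle the two convergences separately, both relying on the weight $a(x)$ from (H7), the weak convergence $u_n \rightharpoonup 0$ in $H^1$, and the decay $a(x)\to 0$ at infinity together with $a\in L^\infty$.

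For the first convergence, recall that $\rho_n\in\partial\Psi(u_n)$ corresponds (via the $L^2$ identification discussed in Section~\ref{section}) to a measurable function $\tilde{\rho}_n$ with $\tilde{\rho}_n(x)\in\partial_t F(x,u_n(x))$ a.e.\ in $\mathbb{R}^N$, so that $\langle\rho_n,v\rangle=\int_{\mathbb{R}^N}\tilde{\rho}_n v\,dx$ for every $v\in H^{1}(\mathbb{R}^N)$. On the other hand $\tilde{\Psi}\in C^1(H^1(\mathbb{R}^N),\mathbb{R})$ (this is a standard consequence of (H6)), and $\langle\tilde{\Psi}'(u_n),v\rangle=\int_{\mathbb{R}^N}h(x,u_n)v\,dx$. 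Hence, from the second inequality in (H7),
\begin{equation*}
|\langle\rho_n-\tilde{\Psi}'(u_n),v\rangle|\leq\int_{\mathbb{R}^N}a(x)|u_n||v|\,dx\leq \|a u_n\|_{2}\|v\|_{2}\leq C\|a u_n\|_{2}\|v\|,
\end{equation*}
where $C$ comes from the continuous embedding $H^1(\mathbb{R}^N)\hookrightarrow L^{2}(\mathbb{R}^N)$. Taking the supremum over $\|v\|\leq 1$, it remains to prove $\|a u_n\|_2\to 0$.

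To obtain $\|a u_n\|_2\to 0$, fix $\varepsilon>0$; since $a(x)\to 0$ as $|x|\to\infty$, choose $R>0$ such that $|a(x)|<\varepsilon$ for $|x|>R$. Then
\begin{equation*}
\int_{\mathbb{R}^N}a(x)^{2}u_n^{2}\,dx\leq \|a\|_{\infty}^{2}\int_{B_R(0)}u_n^{2}\,dx+\varepsilon^{2}\|u_n\|_{2}^{2}.
\end{equation*}
The second term is uniformly bounded by $C\varepsilon^{2}$ because $(u_n)$ is bounded in $H^{1}(\mathbb{R}^N)$, and the first term tends to $0$ because $u_n\rightharpoonup 0$ in $H^{1}(\mathbb{R}^N)$ implies $u_n\to 0$ in $L^{2}(B_R(0))$ by Rellich--Kondrachov. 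Letting $n\to\infty$ and then $\varepsilon\to 0$ gives $\|a u_n\|_2\to 0$, hence $\rho_n-\tilde{\Psi}'(u_n)\to 0$ in $(H^{1}(\mathbb{R}^N))^{*}$.

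For the second convergence, integrate the first inequality of (H7): for every $x\in\mathbb{R}^N$ and every $t\in\mathbb{R}$,
\begin{equation*}
|F(x,t)-H(x,t)|\leq\int_{0}^{|t|}a(x)\,s\,ds=\tfrac{1}{2}a(x)t^{2}.
\end{equation*}
Integrating over $\mathbb{R}^N$ with $t=u_n(x)$ yields $|\Psi(u_n)-\tilde{\Psi}(u_n)|\leq\tfrac{1}{2}\|a^{1/2}u_n\|_{2}^{2}$, and exactly the same splitting argument as above (bounding $a(x)<\varepsilon$ outside a large ball and using local strong $L^{2}$-convergence inside it) shows this quantity tends to $0$. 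The main (and essentially only) obstacle is the uniform decay estimate $\|a u_n\|_2\to 0$; once that is in hand both statements follow immediately from (H7).
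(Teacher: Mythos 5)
Your argument is correct and follows essentially the same route as the paper: bound $|\langle\rho_n-\tilde{\Psi}'(u_n),\varphi\rangle|$ by $\bigl(\int_{\mathbb{R}^N}a(x)^2u_n^2\,dx\bigr)^{1/2}$ via (H7) and H\"older, then kill that integral by splitting $\mathbb{R}^N$ into a large ball (where $u_n\to 0$ in $L^2$ by compact embedding and $u_n\rightharpoonup 0$) and its complement (where $a<\varepsilon$). The only difference is that you write out the analogous estimate $|F(x,t)-H(x,t)|\le\tfrac12 a(x)t^2$ for the second limit explicitly, which the paper leaves as ``a similar argument.''
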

Let $\varphi \in H^{1}(\mathbb{R}^{N})$ with $\|\varphi\|\leq 1$. By $(H7)$,  H\"older inequality and using the inequality $||u||^{2}\geq \mu_{0} ||u||_{2}^{2} \;\forall\; u \in H^{1}(\mathbb{R}^{N})$, we obtain
\begin{eqnarray}
\left|\left<\rho_{n}-\tilde{\Psi}'(u_{n}), \varphi \right>\right|&=&\left|\int_{\mathbb{R}^{N}} [\rho_{n}-h(x,u_{n})]\varphi dx\right| \nonumber \\
&\leq& \int_{\mathbb{R}^{N}} a(x)|u_{n}|\;|\varphi| dx \nonumber \\
&\leq & \left(\int_{\mathbb{R}^{N}}|a(x)|^{2}|u_{n}|^{2}dx\right)^{\frac{1}{2}} ||\varphi||_{2}\nonumber \\
&\leq &\left(\frac{1}{\mu_{0}}\right)^{\frac{1}{2}}\left(\int_{\mathbb{R}^{N}}|a(x)|^{2}|u_{n}|^{2}dx\right)^{\frac{1}{2}}  ||\varphi||.\nonumber 
\end{eqnarray}
Using the fact that $a(x)\rightarrow 0$ whenever $|x|\rightarrow +\infty$, given $\varepsilon>0$ there is  $R_{0}>0$ such that $|a(x)|\leq \varepsilon$ for $|x|> R_{0}$. By compact embedding $H^{1}(B_{R_{0}})\hookrightarrow L^{2}(B_{R_{0}})$ we have $u_{n}\rightarrow 0$ in $ L^{2}(B_{R_{0}})$, thereby there is $n_{0}\in \mathbb{N}$ such that $||u_{n}||_{L^{2}(B_{R_{0}})}\leq \varepsilon,\;\forall\;n \geq n_{0}$, and so,
\begin{eqnarray}
\int_{\mathbb{R}^{N}}|a(x)|^{2}|u_{n}|^{2}dx&=& \int_{B_{R_{0}}}|a(x)|^{2}|u_{n}|^{2}dx+\int_{B^{c}_{R_{0}}}|a(x)|^{2}|u_{n}|^{2} dx\nonumber \\
&\leq & ||a||_{\infty}^{2}\int_{B_{R_{0}}}|u_{n}|^{2}dx+\varepsilon^{2}\int_{B^{c}_{R_{0}}}|u_{n}|^{2}dx \nonumber \\
&\leq &\varepsilon^{2}(||a||_{\infty}^{2}+K^{2}), \nonumber
\end{eqnarray}
where $||u_{n}||\leq K$ for all $n \in \mathbb{N}.$ As $\epsilon$ is arbitrary,  
$$
\rho_{n}-\tilde{\Psi}'(u_{n})\rightarrow 0 \quad \mbox{in} \quad (H^{1}(\mathbb{R}^N))^*.
$$
A similar argument guarantees that  
$$
\Psi(u_{n})-\tilde{\Psi}(u_{n})\rightarrow 0 \quad \mbox{in} \quad \mathbb{R}.
$$
Then, by Claim \ref{A67},
\begin{equation*}
J'(u_{n}) \rightarrow 0\;\;\mbox{and}\;\;{J}(u_{n}) \rightarrow c, \;\;\mbox{as}\;\;n\rightarrow +\infty.
\end{equation*}
As $(u_{n})$ is bounded, it follows that $(u_{n})$ is a  $(C)_{c}$-sequence for the functional ${J}$, and so, arguing as in the proof of Theorem \ref{Teorema1},  there is a nontrivial critical point $u_1\neq 0$ of ${J}$, with ${J}(u_1)\leq c$. On the other hand, by (\ref{T21}), we must have ${J}(u_{0})\leq {J}(u_1)$, from where it follows that 
\begin{equation*}
c \leq \sup_{u \in \mathcal{M}}I(u) \leq {J}(u_{0})\leq {J}(u_1) \leq c,
\end{equation*}
that is, 
\begin{equation*}
\sup_{u \in \mathcal{M}}I(u)=c.
\end{equation*}
Now, as Corollary \ref{N4} still holds when $f$ is non periodic, we can conclude that there is $v\in H^{1}(\mathbb{R}^{N})$ such that $0 \in \partial I(v)$ and $I(v)=c>0$. This finishes the proof of Theorem \ref{Teorema2}

\end{document}